\theoremstyle{plain}
\newtheorem{thm}{Theorem}[section]
\newtheorem*{theoremA}{Theorem A}
\newtheorem*{theoremB}{Theorem B}
\newtheorem*{theoremC}{Theorem C}
\newtheorem*{thm*}{Theorem}
\newtheorem*{prop*}{Proposition}
\newtheorem{lem}[thm]{Lemma}
\newtheorem{prop}[thm]{Proposition}
\newtheorem{cor}[thm]{Corollary}
\theoremstyle{definition}
\newtheorem{defi}[thm]{Definition}
\newtheorem{rmk}[thm]{Remark}
\newtheorem{eg}[thm]{Example}
\newtheorem{question}[thm]{Question}
\DeclareMathOperator{\Amp}{Amp}
\DeclareMathOperator{\im}{im}
\DeclareMathOperator{\ann}{ann}
\renewcommand{\P}{\ensuremath{\mathbb P}}
\newcommand{\NN}{\ensuremath{\mathbb N}}
\newcommand{\PP}{\ensuremath{\mathbb P}}
\newcommand{\QQ}{\ensuremath{\mathbb Q}}
\newcommand{\RR}{\ensuremath{\mathbb R}}
\newcommand{\ZZ}{\ensuremath{\mathbb Z}}
\newcommand{\CC}{\ensuremath{\mathbb C}}
\newcommand{\OO}{\ensuremath{\mathcal O}}
\newcommand{\II}{\ensuremath{\mathcal I}}
\newcommand{\F}{\ensuremath{\mathbb F}}
\newcommand\lra{\longrightarrow}
\newcommand\xred{X_{\textrm{red}}}
\newcommand{\shf}{\ensuremath{{\mathcal F}}}
\newcommand{\shn}{\ensuremath{{\mathcal N}}}
\newcommand{\shg}{\ensuremath{{\mathcal G}}}
\newcommand{\shl}{\ensuremath{{\mathcal L}}}
\newcommand{\shk}{\ensuremath{{\mathcal K}}}
\newcommand{\shc}{\ensuremath{{\mathcal C}}}
\newcommand{\shi}{\ensuremath{{\mathcal I}}}
\newcommand{\hh}[3]{\ensuremath{h^{#1}\left(#2,#3\right)}}
\newcommand{\hhat}[3]{\ensuremath{\widehat{h}^{#1}\left(#2,#3\right)}}
\newcommand{\HH}[3]{\ensuremath{H^{#1}\left(#2,#3\right)}}
\newcommand{\HHv}[3]{\ensuremath{H^{#1}(#2,#3)}}
\newcommand{\ses}[3]{\ensuremath{0\rightarrow #1 \rightarrow #2 \rightarrow #3 \rightarrow 0}}
\newcommand{\vol}[2]{\ensuremath{{\rm vol}_{#1}\left( #2 \right) } }
\newcommand{\st}[1]{\ensuremath{ \left\{ #1 \right\} }}
\newcommand{\deq}{\ensuremath{ \stackrel{\textrm{def}}{=}}}
\newcommand\R{\mathbb R}
\def\.{\cdot}
\def\^{\widehat}
\def\~{\widetilde}
\newcommand\newop[2]{\def#1{\mathop{\rm #2}\nolimits}}
\newop\voll{vol}
\newop\NS{NS}
\newop\Neg{Neg}
\newop\Null{Null}
\newop\Pic{Pic}
\newop\Bstab{B_{+}}
\newop\Bst{B_{stab}}
\newop\Bres{B_{restr}}
\newop\Bplus{\mathbf{B}_+}
\newop\Bminus{\mathbf{B}_-}
\newop\Exc{Exc}
\newop\B{\mathbf{B}{}}
\newop\Bs{Bs}
\newop\End{End}
\newop\Amp{Amp}
\newop\Face{Face}
\newop\BigCone{Big}
\newop\index{ind}
\newop\reg{reg}
\newcommand\eqnref[1]{(\ref{#1})}
\newcommand{\equ}{\ensuremath{\,=\,}}
\newcommand{\dleq}{\ensuremath{\,\leq\,}}
\newcommand{\dgeq}{\ensuremath{\,\geq\,}}
\newcommand{\dsup}{\ensuremath{\,\supset\,}}
\newcommand{\dsubseteq}{\ensuremath{\,\subseteq\,}}
\begin{document}


\title{Positivity  on subvarieties and vanishing of higher cohomology}
\dedicatory{To the memory of Eckart Viehweg}

\thanks{During this project the author was partially supported by  DFG-Forschergruppe 790 ``Classification of Algebraic Surfaces
and Compact Complex Manifolds'', and   the OTKA Grants 77476 and  77604 by the Hungarian Academy of Sciences.}

\author{Alex K\"uronya}
\address{Budapest University of Technology and Economics, Department of Algebra, P.O. Box 91, H-1521 Budapest, Hungary}
\address{Albert-Ludwigs-Universit\"at Freiburg, Mathematisches Institut, Eckerstra\ss e 1, D-79104 Frei\-burg, Germany}
\email{{\tt alex.kueronya@math.uni-freiburg.de}}



\maketitle

\section*{Introduction}

Inspired by the recent paper \cite{Totaro} of Totaro, we investigate the relationship between  ampleness of restrictions of line bundles to general complete intersections and the vanishing properties of higher cohomology groups. This train of thought will eventually lead us to a generalization of Fujita's vanishing theorem to big line bundles. 

Vanishing theorems  played a central role in algebraic geometry during the last fifty years.  Results of this sort due to Serre, Kodaira, Kawamata--Viehweg among others are fundamental building blocks of complex  geometry, and  are indispensable to the successes of minimal model theory. Classically, vanishing theorems apply to ample or big and nef line bundles. However, there has been a recent shift of attention towards big line bundles, which, although possess less positivity, still turn out to share many of the good properties of ample ones (see \cite{AIL} and the references therein). 

It has been known for some time that big line bundles behave in a cohomologically positive way  in degrees roughly  above the dimension of the stable base locus. An easy asymptotic version of this appeared in \cite[Proposition 2.15]{ACF}, while  Matsumura  \cite[Theorem 1.6]{Mats}  
gave a partial generalization of the Kawamata--Viehweg vanishing theorem along these lines.  In this work  we will present similar results guaranteeing the vanishing of cohomology groups of high degree under various partial positivity conditions. We work over the complex numbers, $X$ is an irreducible projective variety of dimension $n$ unless otherwise mentioned. Divisors are meant to be Cartier unless otherwise mentioned. 

Following the footsteps of Andreotti--Grauert~\cite{AG} and Demailly--Peternell--Schneider~\cite{DPS},  Totaro establishes a very satisfactory theory of line bundles with vanishing cohomology above  a certain degree. The concept has various  characterizations  all falling  under the heading of $q$-ampleness.  What is of  interest to us is the following version, which goes by  the name 'naive $q$-ampleness' in \cite{Totaro}.  We call a line bundle $L$ (naively) $q$-ample on $X$ for a natural number $q$, if for every coherent sheaf $\shf$ on $X$ there exists an 
integer $m_0$ depending on $\shf$ such that 
\[
 \HH{i}{X}{\shf\otimes \OO_X(mL)} \equ 0 \ \ \ \text{for all $i>q$ and $m\geq m_0$.}
\]
It is immediate from the definition that $0$-ampleness coincides with ampleness, while it is proved in \cite[Theorem 10.1]{Totaro} that a divisor is $(n-1)$-ample exactly if it does not lie in the negative  of the pseudo-effective cone in the N\'eron--Severi space. The notion of $q$-ampleness shares many important properties of traditional ampleness, for example it is open both in families and in the N\'eron--Severi space.
In general, the behaviour of $q$-ample divisors remains mysterious. 

Our motivation comes from the connection to geometric invariants describing partial positivity, in particular, to amplitude on restrictions to general complete intersection subvarieties.  The main results  of this note are   vanishing theorems valid for not necessarily ample --- oftentimes not even big --- divisors. They all follow the same principle:  positivity of restrictions of line bundles results in partial vanishing of higher cohomology groups.  Our  first  statement of note is a uniform variant of Serre vanishing. 

\begin{theoremA}(Theorem~\ref{thm:A})
Let $X$ be an irreducible projective variety, $L$ a Cartier divisor, $A_1,\dots,A_q$   very ample  Cartier divisors on $X$ such that $L|_{E_1\cap\dots\cap E_q}$ is ample for general $E_j\in |A_j|$, $1\leq j\leq q$. Then for any coherent sheaf $\shf$ on $X$ there exists an
integer $m(L,A_1,\dots,A_q,\shf)$ such that 
\[
 \HHv{i}{X}{\shf\otimes \OO_X(mL+\sum_{j=1}^{q}k_jA_j)} \equ 0
\]
for all $i>q$, $m\geq m(L,A_1,\dots,A_q,\shf)$ and $k_1,\dots,k_q\geq 0$. 
\end{theoremA}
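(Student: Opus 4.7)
The plan is to prove a strengthening of Theorem~A by induction on $q$, in which an additional arbitrary nef line bundle $N$ enters the formula and the threshold $m_0$ does not depend on $N$. Call this statement $P(q)$: under the hypotheses of Theorem~A, for any coherent sheaf $\shf$ there exists $m_0 = m_0(\shf, L, A_1, \ldots, A_q)$ such that
\[
H^i\!\left(X,\ \shf\!\left(mL + \textstyle\sum_{j=1}^q k_j A_j\right)\otimes N\right) \equ 0
\]
for all $i > q$, $m \geq m_0$, $k_j \geq 0$, and every nef line bundle $N$ on $X$. Theorem~A is the case $N = \OO_X$. The base case $P(0)$ reduces to the ampleness of $L$ and is precisely Fujita's vanishing theorem. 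The nef enhancement is essential because on a general hyperplane section $E_q$ the term $k_q A_q|_{E_q}$ will have to play the role of such a nef twist, and without absorbing it one cannot achieve uniformity in $k_q$ when running the induction.

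For the inductive step $P(q-1) \Rightarrow P(q)$, I would choose a general $E_q \in |A_q|$ with two properties: (a)~$E_q$ avoids the associated points of $\shf$, so that the sequence $0 \to \shf(-A_q) \to \shf \to \shf_{E_q} \to 0$ is exact (with $\shf_{E_q} := \shf \otimes \OO_{E_q}$), and (b)~the restricted data $L|_{E_q}, A_1|_{E_q}, \ldots, A_{q-1}|_{E_q}$ still satisfies the ampleness hypothesis on $E_q$. For (b), restrictions of general members of $|A_j|$ from $X$ produce a nonempty open subset of $\prod_{j<q} |A_j|_{E_q}|$ over which $L|_{E_q \cap F_1 \cap \cdots \cap F_{q-1}}$ is ample, and since ampleness is Zariski-open this is all that is required. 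Applying $P(q-1)$ on $E_q$ to the sheaf $\shf_{E_q}$ with nef twist $k_q A_q|_{E_q} + N|_{E_q}$ then yields an $m_0'$, independent of both $N$ and $k_q$, satisfying
\[
H^i\!\left(E_q,\ \shf_{E_q}\!\left(mL + \textstyle\sum_{j=1}^q k_j A_j\right) \otimes N|_{E_q}\right) \equ 0 \quad \text{for } i > q-1,\ m \geq m_0'.
\]

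Twisting the short exact sequence by $\OO_X(mL + \sum k_j A_j)\otimes N$ and taking the long exact sequence of cohomology, the two $E_q$-terms flanking $H^i$ on $X$ vanish for $i > q$ and $m \geq m_0'$, producing the isomorphism
\[
H^i(X, \shf(mL + D - A_q) \otimes N) \cong H^i(X, \shf(mL + D) \otimes N), \qquad D := \textstyle\sum_j k_j A_j,
\]
for all $k_q \geq 1$. Iterating in $k_q$ reduces the problem to the vanishing at $k_q = 0$, and for this last step I would use the isomorphism in the reverse direction. Fix $m \geq m_0'$, $D' = \sum_{j<q} k_j A_j$, and a nef $N$; classical Serre vanishing applied to the coherent sheaf $\shf(mL + D')\otimes N$ against the ample divisor $A_q$ makes $H^i(X, \shf(mL + D' + k_q A_q)\otimes N)$ vanish once $k_q$ is chosen large enough, and transporting this vanishing back along the isomorphism forces $H^i(X, \shf(mL + D')\otimes N) = 0$ for $i > q$. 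This closes $P(q)$.

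The main obstacle is precisely the uniformity issue that motivates the nef enhancement: a naive induction on Theorem~A as stated would apply the inductive hypothesis on $E_q$ to the sheaf $\shf_{E_q}(k_q A_q|_{E_q})$, producing a bound $m_0'(k_q)$ that depends on $k_q$ and destroying the uniformity demanded by the conclusion. Formulating the stronger $P(q)$ with an arbitrary nef twist is exactly what allows the restriction argument to loop.
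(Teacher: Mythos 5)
Your proof is correct and takes essentially the same route as the paper: both restrict to general members of the $|A_j|$, anchor the argument in Fujita's vanishing theorem at the bottom of the flag (whose built-in uniformity over nef twists is exactly what supplies uniformity in the $k_j$), and climb back up through the long exact sequences via the isomorphism-plus-Serre-vanishing trick. Your repackaging as an induction on $q$ with the nef divisor $N$ carried in the inductive statement is just a cleaner organization of the paper's descending induction along the flag $Y_q\subset\dots\subset Y_0=X$, and incidentally yields the mildly stronger Fujita-type conclusion uniform over all nef twists.
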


In particular, setting $k_1=\dots=k_q=0$ in the Theorem  we obtain that  $L$ is $q$-ample. This way we recover a slightly weaker version of  \cite[Theorem 3.4]{DPS}. In a few remarks we then relate Theorem A  to invariants expressing partial positivity  and the inner structure of various  cones of divisors in the N\'eron--Severi space. Here again we go along similar lines to \cite{DPS}; it turns out that sacrificing a certain amount of generality buys  drastically simplified proofs. 

Next, we treat the case of vanishing for adjoint divisors. First, a variant  of the theorem of  Kawamata and Viehweg.

\begin{theoremB}(Theorem~\ref{thm:B})
 Let $X$ be a smooth projective variety, $L$ a divisor, $A$ a very ample divisor on $X$. If $L|_{E_1\cap\dots\cap E_k}$ is big and nef for a general choice of $E_1,\dots,E_k\in |A|$, then 
\[
 \HH{i}{X}{\OO_X(K_X+L)} \equ 0 \ \ \ \text{for $i>k$.}
\]
\end{theoremB}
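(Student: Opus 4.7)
The plan is to prove the statement by induction on $k$, following the classical Bertini-and-adjunction reduction familiar from the proof of Kodaira vanishing. When $k = 0$ the hypothesis says $L$ is big and nef on the smooth projective variety $X$, and the conclusion is precisely the Kawamata--Viehweg vanishing theorem.

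For the inductive step, pick a general member $E \in |A|$; by Bertini, $E$ is smooth. The restricted divisor $A|_E$ is very ample on $E$, and fixing a suitably general $E = E_1$ leaves enough genericity in $(E_2, \dots, E_k) \in |A|^{k-1}$ for the $(k-1)$-fold hypothesis to be satisfied for $(E, L|_E, A|_E)$. Since $A|_E$ is ample on every relevant intersection $E \cap E_2 \cap \dots \cap E_k$, the hypothesis also holds for $L|_E + m A|_E$ for every $m \geq 0$. Applying the inductive conclusion on $E$ yields
$$ H^j\bigl(E,\, K_E + L|_E + m A|_E\bigr) \ = \ 0 \qquad \text{for all } j > k - 1 \text{ and every } m \geq 0. $$

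To pass this vanishing back to $X$, for each $m \geq 1$ I twist the structure sequence $0 \to \OO_X(-A) \to \OO_X \to \OO_E \to 0$ by $\OO_X(K_X + L + mA)$. By adjunction $(K_X + E)|_E = K_E$ together with $E \sim A$, the restricted term becomes $\OO_E(K_E + L|_E + (m-1)A|_E)$. For $i > k$ both its $H^{i-1}$ and $H^i$ vanish by the step above, so the long exact sequence yields
$$ H^i(X,\, K_X + L + (m-1)A) \ \cong \ H^i(X,\, K_X + L + mA) \qquad \text{for every } m \geq 1. $$
Chaining these isomorphisms from $m = 1$ upward and invoking Serre vanishing once $m$ is sufficiently large gives $H^i(X, K_X + L) = 0$ for $i > k$.

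The most delicate point is to verify that the $(k-1)$-fold hypothesis genuinely descends to $E$ with respect to $A|_E$: the restrictions $E_j|_E$ trace out only the image of $|A|$ in the complete linear series of $A|_E$, which may be a proper linear subseries, so some care is needed to conclude the required genericity. This can be handled by replacing $A$ with a sufficient multiple at the outset, so that $H^0(X, A) \to H^0(E, A|_E)$ becomes surjective and the two linear series on $E$ coincide, while checking that the hypothesis of the theorem persists under such a replacement.
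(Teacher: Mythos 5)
Your induction is, at its core, the paper's own argument in different packaging: the paper proves Theorem~\ref{thm:B} by feeding $D=K_X+L+mA$ into the Koszul-type resolution \eqref{eqn:res} with $r=q$ general members of $|A|$, but the mathematical content is the same as yours --- base case Kawamata--Viehweg, adjunction $(K_X+L+mA)|_{E_1\cap\dots\cap E_j}=K_{E_1\cap\dots\cap E_j}+(L+(m-j)A)|_{E_1\cap\dots\cap E_j}$, the resulting isomorphisms $H^i(X,\OO_X(K_X+L+(m-1)A))\cong H^i(X,\OO_X(K_X+L+mA))$ for $i>k$, and Serre vanishing to finish. You also correctly isolate the one delicate point, namely that the traces $E_j|_E$ sweep out only a linear subseries of $|A|_E|$, so the inductive hypothesis as literally stated (with the \emph{complete} linear series of a very ample divisor) does not obviously descend to $E$. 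The paper's induction, which applies the theorem on $E_1\cap\dots\cap E_j$ with the restricted divisor $A|_{E_1\cap\dots\cap E_j}$, quietly faces exactly the same issue.

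However, your proposed repair is wrong. For $E\in|A|$ the cokernel of $H^0(X,\OO_X(A))\to H^0(E,\OO_E(A))$ is the kernel of $H^1(X,\OO_X)\to H^1(X,\OO_X(A))$, and replacing $A$ by $mA$ with $m\gg 0$ kills $H^1(X,\OO_X(mA))$ by Serre vanishing, so the cokernel becomes \emph{all} of $H^1(X,\OO_X)$ --- a space independent of $m$ and nonzero whenever $X$ is irregular. Concretely, on an abelian surface every ample line bundle has vanishing higher cohomology, so \emph{no} very ample divisor on $X$ has the surjectivity you want: the trace of $|A|$ on $E$ remains a proper subseries no matter which multiple you take. (Nor can you sidestep this by semicontinuity: big-and-nefness at the general trace, which is a \emph{special} point of the family of complete intersections from the full series $|A|_E|$, does not formally propagate to the general member, since nefness is neither an open nor a closed condition in families; and for the same reason your parenthetical claim that the hypothesis persists when $A$ is replaced by $mA$ is itself not automatic.) The correct fix is to strengthen the statement you prove by induction: allow the $E_i$ to be general members of any ample, globally generated linear subseries not composed of a pencil --- precisely the scope indicated in the Remark following Theorem~\ref{thm:A}. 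The trace of $|A|$ on $E$ is such a subseries, the base-point-free Bertini theorem still gives smoothness and irreducibility of the relevant intersections, and every other step of your argument then goes through verbatim.
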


The conditions of Matsumura hold true under our assumptions, hence we recover \cite[Theorem 1.6]{Mats}.

Building on the above result, we arrive at our main achievement,  a generalization of Fujita's theorem \cite{Fuj} to big divisors.

\begin{theoremC}(Theorem~\ref{thm:Fujita})
Let $X$ be a complex projective scheme, $L$ a Cartier divisor, $\shf$ a coherent sheaf on $X$. Then there exists a positive integer $m_0(L,\shf)$ such that 
\[
 \HH{i}{X}{\shf\otimes\OO_X(mL+D)} \equ 0
\]
 for all $i>\dim \Bplus (L)$, $m\geq m_0(L,\shf)$, and all nef divisors $D$ on $X$.
\end{theoremC}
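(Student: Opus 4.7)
I propose to prove Theorem~C by induction on $n = \dim X$. After reducing to the case where $X$ is irreducible and reduced (by filtering $\shf$ along irreducible components and along the nilradical) and observing that if $L$ is not big then $\Bplus(L) = X$ and the statement is vacuous, set $s := \dim \Bplus(L) < n$. The strategy is to reduce vanishing on $X$ to vanishing on a general very ample hyperplane section via a standard long-exact-sequence propagation.

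Fix a very ample divisor $A$ on $X$ and a sufficiently general $E \in |A|$ so that (i) $E$ contains no associated point of $\shf$, and (ii) $\dim \Bplus(L|_E) \leq s - 1$, where the second condition follows from the inclusion $\Bplus(L|_E) \subseteq \Bplus(L) \cap E$ for general $E$ together with the transversality of a general very ample hyperplane with $\Bplus(L)$. Tensoring the sequence $0 \to \OO_X(-A) \to \OO_X \to \OO_E \to 0$ with $\shf \otimes \OO_X(mL + D + A)$ yields
\[
0 \to \shf(mL+D) \to \shf(mL+D+A) \to \shf|_E \otimes \OO_E((mL+D+A)|_E) \to 0.
\]
The inductive hypothesis applied on $E$ (of dimension $n-1$) supplies an integer $m_1 = m_1(L|_E, \shf|_E)$ such that $H^j(E, \shf|_E \otimes \OO_E((mL + D')|_E)) = 0$ for every $j > s - 1$, every $m \geq m_1$, and every nef $D'$ on $X$. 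Feeding $D' = D + A$ (which is nef) into the long exact sequence and taking $i > s$, so that $i - 1 \geq s > s - 1$, both $H^{i-1}(E, \cdots)$ and $H^i(E, \cdots)$ vanish, and we obtain an isomorphism $H^i(X, \shf(mL + D)) \cong H^i(X, \shf(mL + D + A))$. Iterating with $D$ replaced by $D + A, D + 2A, \ldots$ (each still nef) produces $H^i(X, \shf(mL + D)) \cong H^i(X, \shf(mL + D + lA))$ for every $l \geq 0$; Serre vanishing applied to the coherent sheaf $\shf(mL + D)$ and the ample $A$ annihilates the right-hand side once $l \gg 0$. Setting $m_0(L, \shf) := m_1$ closes the induction.

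\emph{Main obstacle.} The argument above runs smoothly when $s \geq 1$, because the inductive threshold $s - 1$ on $E$ is nonnegative and the hypothesis removes both relevant cohomology groups on $E$. The genuine difficulty is the boundary case $s = 0$ (so $\Bplus(L)$ is a nonempty finite set of points): here $\Bplus(L|_E) = \emptyset$ and the inductive hypothesis on $E$ reduces to Fujita's original vanishing, giving $H^j(E, \cdots) = 0$ only for $j > 0$. At the critical index $i = 1$ the term $H^0(E, \cdots)$ does not vanish and the long exact sequence delivers only a surjection $H^1(X, \shf(mL + D - A)) \twoheadrightarrow H^1(X, \shf(mL + D))$ rather than an isomorphism. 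Eliminating this obstruction uniformly in the nef divisor $D$ is the principal technical hurdle. Two possible routes are: iterating in the opposite direction $D \mapsto D - A$ and invoking Serre duality together with Serre vanishing to kill $H^1(X, \shf(mL + D - lA))$ for $l \gg 0$ (requiring some depth or regularity control on $\shf$), or isolating the boundary index by a direct application of Theorem~A with $q = s + 1$ very ample divisors and supplementing it with an ad hoc uniform surjectivity statement for the restriction of global sections from $X$ to $E$.
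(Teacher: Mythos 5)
Your reduction to the irreducible reduced case and your inductive step are sound whenever $s:=\dim\Bplus(L)\geq 1$, but the $s=0$ case you flag is a genuine and, moreover, load-bearing gap rather than a corner case. Your induction on $\dim X$ invokes the full statement on the hyperplane section $E$: when $s=1$ on $X$, a general $E$ only satisfies $\Bplus(L|_E)\subseteq\Bplus(L)\cap E$ \emph{finite}, possibly nonempty, so the unproved zero-dimensional case in dimension $n-1$ blocks every case $s\geq 1$ in dimension $n$. And at $s=0$ the mechanism really does break: with only $\HH{1}{E}{\cdot}=0$ available, the long exact sequence gives a surjection $\HH{1}{X}{\shf(mL+D)}\twoheadrightarrow \HH{1}{X}{\shf(mL+D+A)}$ pointing the wrong way, so Serre vanishing kills the target and says nothing about the source; killing the connecting map would require uniform surjectivity of $\HH{0}{X}{\shf(mL+D+A)}\to\HH{0}{E}{\cdot}$ over all nef $D$ and all $m\gg 0$. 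Neither of your proposed escapes works as stated: twisting down, the inductive hypothesis on $E$ no longer applies because $(D-lA)|_E$ is not nef (and dually $h^1$ of large negative twists grows, so there is nothing to converge to), while the uniform restriction-surjectivity statement is of essentially the same depth as the theorem itself. (If one knew that $\Bplus$ never has isolated points --- a nontrivial fact proved only later, by Birkar --- your $s=0$ case would become vacuous and your induction would close; but that input is available neither to you nor in this paper.)

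The paper avoids hyperplane-induction on $X$ entirely and defuses exactly this obstruction by passing to adjoint bundles. Proposition~\ref{prop:resolution} produces, for any coherent $\shf$, a possibly infinite resolution by direct sums of $\OO_X(-m_iL)$ exact off $\Bplus(L)$, so by the standard Fujita chase (\cite[Proposition B.1.2 and Remark B.1.4]{PAG}) it suffices to prove the theorem for a single sheaf $\OO_X(aL)$; bigness of $L$ then gives an injection $\shk_X=\mu_*\OO_{\tilde X}(K_{\tilde X})\hookrightarrow\OO_X(aL)$ with cokernel supported on a proper subscheme (handled by induction on dimension), reducing everything to $\shf=\shk_X$. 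There Grauert--Riemenschneider plus the Kawamata--Viehweg-type Corollary~\ref{cor:birational vanishing} yields $\HH{i}{\tilde X}{\OO_{\tilde X}(K_{\tilde X}+\mu^*(aL+D))}=0$ for $i>s$ \emph{simultaneously for all nef $D$}, with no threshold problem in $m$, because adjoint vanishing is not asymptotic. The crucial point is that this corollary needs $L$ only \emph{big and nef} --- not ample --- on a general complete intersection of codimension $s$: after $s$ general cuts the augmented base locus becomes finite, and a big divisor with finite augmented base locus is automatically nef, since any curve on which it is nonpositive would lie in $\Bplus$. So where your argument needs to upgrade a one-sided surjection at the bottom index $i=1$, the paper's adjoint-bundle detour simply never sees that index; that substitution of ``big and nef after $s$ cuts'' for ``ample after $s+1$ cuts'' is the missing idea.
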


Here $\Bplus(L)$ denotes the augmented base locus of $L$; this  can be defined as the stable base locus of the $\QQ$-Cartier divisor $L-A$ for any sufficiently small ample class $A$.

A few words about the organization of the paper. Section 1 is devoted to Theorem A, and a discussion of  invariants measuring partial positivity.
Theorem B is treated in Section 2, while Section 3 is given over to a short treatment of base loci on schemes. The proof of Theorem C takes up the last section. 

\subsection*{Acknowledgements} Helpful discussions with  Brian Conrad, Tommaso de Fernex, Lawrence Ein, Daniel Greb, Stefan Kebekus, Rob Lazarsfeld, Vlad Lazi\'c, Sebastian Neumann, Mihnea Popa, Tomasz Szemberg, and Burt Totaro were much appreciated. 


\section{Ampleness on restrictions and cones in the N\'eron--Severi space}

In this section we prove  a Fujita--Serre type vanishing statement  and consider an  application to  cone structures in $N^1(X)_\RR$.
This is where one can see most clearly  the yoga of obtaining partial vanishing of higher cohomology groups by forcing ampleness on restrictions.

\begin{thm}\label{thm:A}
 Let $X$ be an irreducible projective variety, $L$ a Cartier divisor, $A_1,\dots,A_q$   very ample  Cartier divisors on $X$ such that $L|_{E_1\cap\dots\cap E_q}$ is ample for general $E_j\in |A_j|$. Then for any coherent sheaf $\shf$ on $X$ there exists an
integer $m(L,A_1,\dots,A_q,\shf)$ such that 
\[
 \HHv{i}{X}{\shf\otimes \OO_X(mL+\sum_{j=1}^{q}k_jA_j)} \equ 0
\]
for all $i>q$, $m\geq m(L,A_1,\dots,A_q,\shf)$ and $k_j\geq 0$. In particular, $L$ is $q$-ample.
\end{thm}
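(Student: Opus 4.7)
The plan is induction on $q$, proving the slightly stronger Fujita-style statement that for every coherent sheaf $\shf$ on $X$ there exists $m_0$ such that
\[
H^i(X,\shf\otimes\OO_X(mL+\sum_{j=1}^q k_jA_j+D))\equ 0
\]
for all $i>q$, $m\geq m_0$, $k_j\geq 0$, and every nef Cartier divisor $D$ on $X$; the original assertion is the case $D=0$. The base case $q=0$ merely requires ampleness of $L$, and the claimed uniform vanishing with a nef $D$ present is precisely Fujita's vanishing theorem.

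For the inductive step at level $q\geq 1$, choose a general $E_q\in|A_q|$ which is irreducible (Bertini) and avoids every associated prime of $\shf$, so that
\[
0\to\shf(-A_q)\to\shf\to\shf|_{E_q}\to 0
\]
is exact. The hypothesis transfers to $E_q$ with the $q-1$ very ample divisors $A_1|_{E_q},\dots,A_{q-1}|_{E_q}$: for general $E'_j\in|A_j|$, the divisor $L|_{E_q\cap E'_1\cap\dots\cap E'_{q-1}}$ is ample because it is the restriction of the ample $L|_{E'_1\cap\dots\cap E'_{q-1}\cap E_q}$. The strengthened inductive hypothesis then yields an integer $m_1=m_1(\shf|_{E_q})$ such that, absorbing $(k_qA_q+D)|_{E_q}$ into the nef summand,
\[
H^j(E_q,\shf|_{E_q}(mL|_{E_q}+\sum_{l<q}k_lA_l|_{E_q}+k_qA_q|_{E_q}+D|_{E_q}))\equ 0
\]
for all $j>q-1$, $m\geq m_1$, $k_l,k_q\geq 0$, and $D$ nef on $X$. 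In particular both $H^{i-1}$ and $H^i$ on $E_q$ vanish uniformly whenever $i>q$.

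Twisting the short exact sequence by $\OO_X(mL+\sum_j k_jA_j+D)$ and reading off the long exact sequence, the simultaneous vanishing of $H^{i-1}$ and $H^i$ of the quotient yields the isomorphism
\[
H^i(X,\shf(mL+\sum_{j<q}k_jA_j+(k_q-1)A_q+D))\;\cong\;H^i(X,\shf(mL+\sum_j k_jA_j+D))
\]
for $i>q$ and $m\geq m_1$. Therefore, with $m,k_1,\dots,k_{q-1}$ and $D$ held fixed, this cohomology group is independent of $k_q$; classical Serre vanishing, applied to the coherent sheaf $\shf(mL+\sum_{j<q}k_jA_j+D)$ twisted by the ample $A_q$, produces vanishing at some (possibly huge, non-uniform) value of $k_q$, and the isomorphism propagates it to every $k_q\geq 0$. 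Setting $m_0=m_1$ and specialising $D=0$ completes the inductive step and recovers Theorem~\ref{thm:A}.

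The main obstacle is the uniformity in the $k_j$. A direct application of the inductive hypothesis on $E_q$ sheaf-by-sheaf to $\shf|_{E_q}(k_qA_q|_{E_q})$ would give a bound on $m$ that depends on $k_q$, destroying uniformity. The Fujita-style strengthening circumvents this by letting $k_qA_q|_{E_q}$ (together with $D|_{E_q}$) be subsumed by a generic nef summand on $E_q$, producing a single bound $m_1$ that controls all twists simultaneously; the long-exact-sequence argument then propagates the resulting vanishing to $X$ uniformly in $k_q$.
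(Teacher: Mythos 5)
Your proof is correct in its essentials and runs on the same engine as the paper's: Fujita's vanishing theorem supplies the uniform input, and the lifting step is the identical trick --- two consecutive vanishings on the cutting divisor give an isomorphism upstairs under adding a copy of $A_q$, then Serre vanishing at one huge twist is propagated back through the chain of isomorphisms. The packaging, however, genuinely differs. The paper fixes one general flag $Y_j=E_1\cap\dots\cap E_j$ once and for all, applies Fujita exactly once at the bottom $Y_q$ (where $L$ restricts to an ample divisor and the $\sum k_lA_l$ play the role of the nef twists), and climbs the flag by descending induction on $j$. You instead induct on $q$, cutting one divisor at a time, and strengthen the inductive statement to admit an arbitrary nef divisor $D$, so that the base case $q=0$ is literally Fujita's theorem. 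The strengthening buys two things: the inductive hypothesis on $E_q$ swallows $k_qA_q+D$ wholesale, so uniformity in the $k_j$ comes for free rather than by carrying the explicit twists through every sequence (the paper even needs a semicontinuity remark at this point, which you avoid by fixing $E_q$ before invoking the hypothesis); and what you prove is formally stronger than Theorem~\ref{thm:A} --- a $q$-ample analogue of Fujita vanishing, in the spirit of Theorem~\ref{thm:Fujita}. The cost is that each inductive step must transfer the hypothesis to $E_q$, something the fixed-flag argument never has to do.

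That transfer is your one loose end. The strengthened statement applied to $(E_q,\,L|_{E_q},\,A_1|_{E_q},\dots,A_{q-1}|_{E_q})$ requires ampleness of $L$ restricted to intersections of \emph{general members of the full linear systems} $|A_j|_{E_q}|$ on $E_q$, whereas you verify it only for divisors of the form $E_j'\cap E_q$ with $E_j'\in|A_j|$ general. These restricted divisors form a linear subfamily which is a proper closed subvariety of $|A_j|_{E_q}|$ whenever $H^0(X,\OO_X(A_j))\to H^0(E_q,\OO_{E_q}(A_j))$ fails to be surjective, so ``general in the subfamily'' does not formally imply ``general in the full system.'' The repair is routine: either invoke openness of ampleness in proper flat families (the complete intersections of members of the full systems have constant Hilbert polynomial over the open locus of expected dimension, hence form a flat family; ampleness holds at the parameter points you produced, hence on a dense open set), or reformulate your induction so that the hypothesis only asks for ampleness along general members of \emph{some} base-point-free very ample subsystems --- that weaker hypothesis restricts to $E_q$ verbatim and is all your argument ever uses. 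Finally, note the trivial edge cases where Bertini irreducibility is unavailable ($\dim X\le 1$, or more generally once $q$ reaches the dimension of the ambient variety in the induction): there the conclusion is automatic from Grothendieck vanishing since $i>q$ exceeds the dimension of the support; the paper glosses these as well.
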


\begin{proof}
Let $E_j\in |A_j|$ ($1\leq j\leq q$) be a sequence of general divisors. In particular  we assume all possible intersections  to be irreducible.   Consider the  set of standard exact sequences 
\begin{eqnarray}\label{eqn:ses}
 0 & \to & \shf\otimes\OO_{Y_j}(mL+\sum_{l=1}^{q}{k_lA_l}) \to  \shf\otimes\OO_{Y_j}(mL+\sum_{l=1}^{q}{k_lA_l}+A_{j+1})  \\
 &\to & \shf\otimes\OO_{Y_{j+1}}(mL+\sum_{l=1}^{q}{k_lA_l}+A_{j+1}) \to 0 \nonumber
 \end{eqnarray}
for all $0\leq j\leq q-1$, all $m$, and all $k_1,\dots,k_q\geq 0$. Here  $Y_j\deq E_1\cap\dots\cap E_j$ for all $1\leq j\leq q$, for the sake of completeness set $Y_0\deq X$.  Take a look at the last one of these. 

Fujita's vanishing theorem on $Y_q=E_1\cap\dots\cap E_q$ applied to the ample divisor $L|_{Y_q}$  gives that 
\[
 \HHv{i}{Y_q}{\shf\otimes\OO_{Y_q}(mL+\sum_{l=1}^{q}{k_lA_l})} \equ 0 
\]
for all $i\geq 1$, $m\geq m(\shf,L,A_1,\dots,A_q,Y_q)$ and all $k_1,\dots,k_q\geq 0$.

This implies that the groups on the sides of the exact sequence 
\begin{eqnarray*}
 && \HHv{i-1}{Y_q}{\shf\otimes\OO_{Y_q}(mL+\sum_{l=1}^{q}{k_lA_l}+A_q)} \lra \HHv{i}{Y_{q-1}}{\shf\otimes\OO_{Y_{q-1}}(mL+\sum_{l=1}^{q}{k_lA_l})}  \\ && \lra \HHv{i}{Y_{q-1}}{\shf\otimes\OO_{Y_{q-1}}(mL+\sum_{l=1}^{q}{k_lA_l}+A_q)} \lra \HHv{i}{Y_q}{\shf\otimes\OO_{Y_q}(mL+\sum_{l=1}^{q}{k_lA_l}+A_q)}
\end{eqnarray*}
vanish for $i\geq 2$,  $m\geq m(\shf,L,A_1,\dots,A_q,Y_q)$ and $k_1,\dots,k_q\geq 0$. Consequently,
\begin{eqnarray*}
\HHv{i}{Y_{q-1}}{\shf\otimes\OO_{Y_{q-1}}(mL+\sum_{l=1}^{q}{k_lA_l})} & \equ & \HHv{i}{Y_{q-1}}{\shf\otimes\OO_{Y_{q-1}}(mL+\sum_{l=1}^{q}{k_lA_l}+kA_q)}
\end{eqnarray*}
for all $i\geq 2$, $m\geq m(\shf,L,A_1,\dots,A_q,Y_q)$, and $k_1,\dots,k_q\geq 0$. Then
\[
 \HHv{i}{Y_{q-1}}{\shf\otimes\OO_{Y_{q-1}}(mL+\sum_{l=1}^{q}{k_lA_l}+kA_q)} \equ 0
\]
follows for all $k\geq 0$ from Serre vanishing applied to the  ample divisor $A_q|_{Y_{q-1}}$. By the semicontinuity theorem and the general choice of the $E_j$'s we can drop the dependence of $Y_q$.

Working backwards along the cohomology  sequences associated to the  sequences~\eqref{eqn:ses}, we obtain by  descending induction on $j$ that 
\[
 \HHv{i}{Y_j}{\shf\otimes\OO_{Y_j}(mL+\sum_{l=1}^{q}{k_lA_l})} \equ 0
\]
for $i> q-j$,$m\gg 0$, and all $k_1,\dots,k_q\geq 0$. This gives the required result when $j=0$.
\end{proof}

\begin{rmk}
We point out that the proof works under the less restrictive assumption that  $A_i$ is  ample, globally generated, and not composed of a pencil for all $1\leq i\leq q$.
\end{rmk}

The next step is to connect up with $q$-ampleness. Partial positivity was studied  in the form of uniform $q$-ampleness  in \cite{DPS}, where among many other achievements it was established that uniform  $q$-ampleness respects numerical equivalence of Cartier divisors. 
For the sake of completeness we briefly recall Totaro's main result on $q$-ample line bundles.

\begin{thm}\cite[Theorem 8.1]{Totaro}
Let $X$ be a projective scheme over a field of characteristic zero, $A$ a very ample divisor  on $X$, $0\leq q\leq n=\dim X$ an integer. Then there exists a natural number $m_0$ such that for all Cartier divisors $L$ on $X$  the following properties are equivalent.
\begin{enumerate}
 \item There exists a natural number $n_0$ such that $\HH{i}{X}{\OO_X(n_0L-jA)}=0$ for all $i>q$ and $1\leq j\leq m_0$.
\item ($L$ is naively $q$-ample) For every coherent sheaf $\shf$ on $X$ there exists an integer $m(L,\shf)$ such that $\HH{i}{X}{\shf\otimes\OO_X(mL)}=0$ for all $i>q$ and $m\geq m(L,\shf)$.
\item ($L$ is uniformly $q$-ample) There exists a constant $\lambda>0$ such that for all $i>q$,$j>0$  and  $\tfrac{m}{j}\geq \lambda$
the cohomology groups $\HH{i}{X}{\OO_X(mL-jA)}$ vanish. 
\end{enumerate}
 \end{thm}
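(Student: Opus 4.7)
The plan is to prove the cycle $(3) \Rightarrow (2) \Rightarrow (1) \Rightarrow (3)$; the first two arrows are essentially formal, and all the content sits in the last. For $(3) \Rightarrow (2)$, given a coherent sheaf $\shf$ on $X$, I would resolve $\shf$ by iterating global generation of $\shf \otimes \OO_X(kA)$, producing a truncated resolution $E_n \to \cdots \to E_0 \to \shf \to 0$ with each $E_s$ a finite direct sum of line bundles $\OO_X(-k_s A)$, $k_s \geq 0$; twisting by $\OO_X(mL)$ and running the hypercohomology spectral sequence, the uniform vanishing of (3) in the regime $m \geq \lambda \max_s k_s$ kills the $E_1$-terms in degrees $> q$, yielding naive $q$-ampleness. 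For $(2) \Rightarrow (1)$, I would feed each of the finitely many coherent sheaves $\OO_X(-jA)$, $j = 1, \ldots, m_0$, into (2) and take $n_0$ to be the maximum of the resulting thresholds.

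For $(1) \Rightarrow (3)$, I would fix $m_0 := n+1$ and first establish a Castelnuovo--Mumford-style regularity statement: assuming (1), the claim is that $H^i(X, \OO_X(n_0 L + kA)) = 0$ for all $i > q$ and all $k \geq -m_0$. The range $-m_0 \leq k \leq -1$ is precisely the hypothesis. For $k \geq 0$ I would invoke the Koszul complex of $n+1$ general sections of $A$, whose common zero locus on $X$ is empty (since $\dim X = n$), yielding an exact resolution
\[
0 \to \OO_X(-(n+1)A) \to \OO_X(-nA)^{\oplus \binom{n+1}{n}} \to \cdots \to \OO_X(-A)^{\oplus (n+1)} \to \OO_X \to 0.
\]
Twisting by $\OO_X(n_0 L + kA)$ and iterating the resulting syzygy short exact sequences (in the manner of the proof of Theorem A above), one deduces vanishing of $H^i(X, \OO_X(n_0 L + kA))$ from the vanishing of $H^{i+s}(X, \OO_X(n_0 L + (k-s)A))$ for $s = 1, \ldots, n$; the term at $s = n+1$ drops out for cohomological dimension reasons. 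Induction on $k \geq 0$ then closes the loop, because the shifts $k - s$ remain $\geq -m_0$ throughout.

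To boost this single-threshold regularity into uniform $q$-ampleness, I would induct on $n = \dim X$, using a general $H \in |A|$ for the inductive reduction. The sequence $0 \to \OO_X(-A) \to \OO_X \to \OO_H \to 0$ tensored with $\OO_X(n_0 L - (j-1)A)$, combined with (1) on $X$, transfers hypothesis (1) to $L|_H$ on $H$ with threshold $m_0 - 1 = n$ --- matching the value of $m_0$ for $\dim H$; by inductive hypothesis $L|_H$ is uniformly $q$-ample on $H$. Propagating back to $X$ through the long exact sequence
\[
\cdots \to H^{i-1}(H, \OO_H(mL - (j-1)A)) \to H^i(X, \OO_X(mL - jA)) \to H^i(X, \OO_X(mL - (j-1)A)) \to \cdots,
\]
with the regularity of the previous step supplying the base case $j \leq m_0$, should then deliver uniform $q$-ampleness of $L$ on $X$.

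The hard part will be the boundary cohomological degree $i = q+1$ in this last propagation: controlling $H^{i-1}(H, \ldots) = H^q(H, \ldots)$ requires information sharper than mere uniform $q$-ampleness of $L|_H$. To circumvent this I would either run a joint induction on $(n, q)$, taking the base case $q = n - 1$ from the pseudo-effective-cone characterization of \cite[Theorem~10.1]{Totaro}, or deploy a double-Koszul argument using two transverse hyperplane sections simultaneously, in the spirit of Theorem A above. Either route demands careful bookkeeping to ensure the resulting $\lambda$ depends only on $(X, A, q)$ and not on $L$, which is precisely the technical core of Totaro's argument.
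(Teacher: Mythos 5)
The paper does not actually prove this statement: it is quoted verbatim from \cite{Totaro} (``for the sake of completeness we briefly recall Totaro's main result''), so there is no in-paper argument to compare against, and your proposal has to be measured against Totaro's original proof. Your implications $(3)\Rightarrow(2)$ (truncated resolution by sums of $\OO_X(-k_sA)$ plus the hypercohomology spectral sequence) and $(2)\Rightarrow(1)$ (apply naive $q$-ampleness to the finitely many sheaves $\OO_X(-jA)$) are correct and standard. Your first step towards $(1)\Rightarrow(3)$ is also sound: $n+1$ general sections of $A$ have empty common zero locus on the $n$-dimensional scheme $X$, the Koszul complex on them is exact, and the induction on $k$ you describe does extend the hypothesis to $\HHv{i}{X}{\OO_X(n_0L+kA)}=0$ for all $i>q$ and $k\geq -(n+1)$, with the $s=n+1$ term killed by $i+n+1>n$.

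The genuine gap is exactly where you flag it, and it is not a bookkeeping issue. In the hyperplane induction, the boundary degree $i=q+1$ requires control of $\HHv{q}{H}{\cdot}$, and uniform $q$-ampleness of $L|_H$ by construction says nothing about cohomology in degree $q$; no rearrangement of the long exact sequences supplies that information. Your first fallback is circular: the characterization of $(n-1)$-ampleness by non-pseudo-effectivity of $-L$ is proved later in Totaro's paper using the naive-equals-uniform equivalence and its consequences (openness, numerical invariance), i.e., using the very theorem at hand. Your second fallback (``double-Koszul'') is not an argument. Moreover, your fixed choice $m_0=n+1$ is not known to suffice: in Totaro's proof $m_0$ depends on $A$ through Koszulity of an associated section ring, with $m_0=n+1$ available only under such a Koszulity hypothesis, and Backelin's theorem (high Veronese subrings are Koszul) used to handle general $A$ at the price of a larger $m_0$. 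The missing engine is precisely this Koszul-algebra machinery: Koszulity yields resolutions of arbitrary coherent sheaves (equivalently, a resolution of the diagonal with terms $\OO(-j)\boxtimes R_j$) whose twists grow \emph{linearly} in homological degree, and that linear control is what converts your single-threshold regularity statement for $n_0L$ into vanishing of $\HHv{i}{X}{\OO_X(mL-jA)}$ on the whole cone $m/j\geq\lambda$. With only Castelnuovo--Mumford-type resolutions, the natural induction on $m$ costs a fixed amount of twist (roughly the $A$-regularity of $\OO_X(n_0L)$) per step, so the admissible $j$ shrinks instead of growing linearly in $m$, and condition $(3)$ is never reached.
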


This first consequence of Theorem~\ref{thm:A} is the following  claim, which was also proved by Demailly--Peternell--Schneider  under the more general assumption that $L$ is $(n-q)$-flag ample (see \cite[Definition 3.1]{DPS}). Their proof however requires considerably more effort. 

\begin{cor}
With notation as above, if $L|_{E_1\cap\dots\cap E_q}$ is ample for general $E_j\in |A_j|$, then $L$ is $q$-ample.
\end{cor}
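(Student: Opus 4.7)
The plan is simply to specialize Theorem~\ref{thm:A} to the case $k_1=\dots=k_q=0$. Given the hypothesis that $L\vert_{E_1\cap\dots\cap E_q}$ is ample for general $E_j\in|A_j|$, Theorem~\ref{thm:A} produces, for every coherent sheaf $\shf$ on $X$, an integer $m(L,A_1,\dots,A_q,\shf)$ such that
\[
\HHv{i}{X}{\shf\otimes\OO_X\bigl(mL+\textstyle\sum_{j=1}^{q}k_jA_j\bigr)} \equ 0
\]
whenever $i>q$, $m\geq m(L,A_1,\dots,A_q,\shf)$, and $k_1,\dots,k_q\geq 0$. Setting all the $k_j$ equal to zero yields $\HHv{i}{X}{\shf\otimes\OO_X(mL)}=0$ for $i>q$ and $m$ sufficiently large (depending on $\shf$), which is exactly the definition of naive $q$-ampleness of $L$ recalled in the introduction.

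So there is essentially no independent content to prove: the corollary was already noted as the ``in particular'' clause of Theorem~\ref{thm:A}, and the only substantive work — the inductive climb through the exact sequences~\eqref{eqn:ses} starting from Fujita vanishing on $Y_q$ — is carried out there. The point worth stressing in the write-up is simply that the integer $m_0$ supplied by Theorem~\ref{thm:A} depends only on $\shf$ once $L$ and the $A_j$ have been fixed, which matches the dependence demanded by naive $q$-ampleness.

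If there is any obstacle at all, it is conceptual rather than technical: comparing the hypothesis here with that of \cite[Theorem~3.4]{DPS}. Demailly--Peternell--Schneider work with the weaker notion of $(n-q)$-flag ampleness, so strictly speaking our corollary is a special case of theirs; the merit, as emphasized in the text, is that the hypothesis of ampleness on general complete intersections of very ample divisors makes the argument via Fujita vanishing essentially immediate, whereas the DPS proof is substantially more involved. No further estimate or cohomological input beyond Theorem~\ref{thm:A} is needed.
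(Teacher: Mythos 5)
Your proposal is correct and matches the paper exactly: the corollary is the ``in particular'' clause of Theorem~\ref{thm:A}, obtained by setting $k_1=\dots=k_q=0$, and the paper offers no further argument beyond this specialization. Your remark that the resulting $m_0$ depends only on $\shf$ (for fixed $L$ and $A_j$), as naive $q$-ampleness requires, correctly identifies the only point needing verification.
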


The above vanishing result provides a birational variant for the higher asymptotic cohomology $\hhat{i}{X}{L}$ of $L$. We remind that
\[
 \hhat{i}{X}{L} \deq \limsup_{m} \frac{\hh{i}{X}{\OO_X(mL)}}{m^n/n!}\ ;
\]
note that $\widehat{h}^0(X,L)=\vol{X}{L}$ holds by definition. For properties of higher asymptotic cohomology  the reader is  referred to \cite{dFKL,ACF}, or Demailly's paper \cite{Dem} in the analytic setting. 

\begin{cor}\label{cor:asy}
Let $X$ be an irreducible projective variety, $L$ a Cartier divisor on $X$. Assume that there exists a proper birational morphism $\pi:Y\to X$, a natural number $q$, and very ample divisors $A_1,\dots ,A_q$ on $Y$ such that $\pi^*L|_{E_1\cap\dots\cap E_q}$ is ample for general elements $E_i\in |A_i|$, for all  $1\leq i\leq q$. Then  
\[
 \hhat{i}{X}{L} \equ 0 \ \ \text{ for $i>q$.}
\]
\end{cor}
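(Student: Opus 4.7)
The plan is to pull the problem back to $Y$, apply Theorem~\ref{thm:A} there, and then push the resulting vanishing back down to $X$ at a cost that is asymptotically negligible compared to the normalisation $m^n/n!$.

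First I would apply Theorem~\ref{thm:A} on $Y$ with $\shf = \OO_Y$, Cartier divisor $\pi^*L$, and the given very ample divisors $A_1,\dots,A_q$. Setting all $k_j=0$ produces an integer $m_0$ with
\[
\HHv{i}{Y}{\OO_Y(m\pi^*L)} \equ 0 \quad \text{for } i>q \text{ and } m\geq m_0,
\]
so in particular $\hhat{i}{Y}{\pi^*L} \equ 0$ for $i>q$, and what remains is to compare asymptotic cohomology on $Y$ with that on $X$.

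For the descent I would combine the Leray spectral sequence with the projection formula $R^j\pi_*\bigl(\pi^*\OO_X(mL)\bigr) \cong R^j\pi_*\OO_Y \otimes \OO_X(mL)$ (valid because $\OO_X(mL)$ is locally free) with the short exact sequence
\[
0 \lra \OO_X \lra \pi_*\OO_Y \lra \shc \lra 0,
\]
whose left inclusion is injective since $\OO_X$ is torsion-free on the integral variety $X$ and the map is a generic isomorphism. Because $\pi$ is birational, the supports of $R^j\pi_*\OO_Y$ for $j\geq 1$ and of $\shc$ all lie in the exceptional locus of $\pi$, hence have dimension at most $n-1$. Invoking the standard growth estimate $\hh{i}{X}{\shg \otimes \OO_X(mL)} = O\bigl(m^{\dim\supp\shg}\bigr)$ for a coherent sheaf $\shg$ on $X$, every $E_2^{p,j}$ with $j\geq 1$ in the Leray spectral sequence, as well as all cohomology groups of $\shc\otimes\OO_X(mL)$, is $O(m^{n-1})$. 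A routine assembly via the long exact sequence attached to the displayed short exact sequence and a term-by-term estimate of the Leray spectral sequence then gives
\[
\hh{i}{X}{\OO_X(mL)} \equ \hh{i}{Y}{\OO_Y(m\pi^*L)} + O(m^{n-1}).
\]
For $i>q$ and $m\geq m_0$ the first summand on the right vanishes by the first step, so the left-hand side is $O(m^{n-1}) = o(m^n)$ and $\hhat{i}{X}{L} = 0$.

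The main obstacle is precisely this assembly step: each ingredient --- Theorem~\ref{thm:A} applied to $\pi^*L$, projection formula, Leray spectral sequence, torsion-freeness of $\OO_X$ on an integral scheme, and the polynomial growth bound for cohomology of sheaves with lower-dimensional support --- is standard in isolation, but one has to arrange them so that every contribution other than $\HHv{i}{Y}{\OO_Y(m\pi^*L)}$ grows strictly slower than $m^n$ and therefore disappears under the normalisation $\limsup_m \hh{i}{X}{\OO_X(mL)}/(m^n/n!)$.
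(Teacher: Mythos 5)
Your proof is correct, and its first half coincides exactly with the paper's: both apply Theorem~\ref{thm:A} on $Y$ to $\pi^*L$ (with $\shf=\OO_Y$ and all $k_j=0$) to obtain $\HHv{i}{Y}{\OO_Y(m\pi^*L)}=0$ for $i>q$ and $m\gg 0$, hence $\hhat{i}{Y}{\pi^*L}=0$ for $i>q$. Where you genuinely diverge is the descent to $X$: the paper disposes of this step in one line by invoking the birational invariance of asymptotic cohomology, $\hhat{i}{X}{L}=\hhat{i}{Y}{\pi^*L}$, citing \cite[Corollary 2.10]{ACF}, whereas you reprove the needed comparison from scratch via the Leray spectral sequence, the projection formula, the sequence $0\to\OO_X\to\pi_*\OO_Y\to\shc\to 0$ (where you correctly do not assume $X$ normal, so that $\pi_*\OO_Y$ need not equal $\OO_X$), and the standard growth bound $\hh{i}{X}{\shg\otimes\OO_X(mL)}=O(m^{\dim\supp\shg})$, which is valid for an arbitrary Cartier divisor $L$. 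Your assembly is sound: the sheaves $R^j\pi_*\OO_Y$ for $j\geq 1$ and $\shc$ are supported on proper closed subsets of $X$, so every correction term is $O(m^{n-1})$, and in fact only the one-sided inequality $\hh{i}{X}{\OO_X(mL)}\leq \hh{i}{Y}{\OO_Y(m\pi^*L)}+O(m^{n-1})$ is needed, which your term-by-term spectral sequence estimate delivers since all differentials hitting the $(i,0)$-terms originate from groups with $j\geq 1$. In effect you have inlined the proof of the cited result: the citation buys brevity and the full two-sided equality of asymptotic cohomology classes under pullback, while your version buys a self-contained argument tailored to the single vanishing statement at hand, at the cost of redoing d\'evissage that the reference already provides.
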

\begin{proof}
As a consequence of Theorem~\ref{thm:A}, one has $\HH{i}{Y}{\pi^*\OO_X(mL)}=0$ for all $i>q$ and $m\gg 0$. This gives $\hhat{i}{Y}{\pi^*L}=0$ for all $i>q$. By the birational invariance of  asymptotic cohomology \cite[Corollary 2.10]{ACF}
\[
\hhat{i}{X}{L} \equ \hhat{i}{Y}{\pi^*L} \equ 0 \ \ \text{for all $i>q$.}
\]
\end{proof}

We move on to building a connection to the interior structure of the N\'eron--Severi space. For a Cartier divisor $L$
\[
q(L) \deq \min\st{q\in\NN\,|\, \text{$L$ is $q$-ample}} 
\]
is an interesting numerical invariant, which  was probably first defined in \cite[Definition 1.1]{DPS}.  To put it into perspective, let us briefly recall some other ways of  expressing partial ampleness associated to big divisors (these were discussed in an earlier version of \cite{dFKL}). 
\begin{align*} 
a(L;A_1,\dots,A_n) \ &\deq \  \min \{ k \mid\text{$L|_{E_1\cap\dots\cap E_k}$ is ample for very general $E_i \in
|A_i|$}\big \}, \\
b(L) \ &\deq \ \dim \Bplus(L), \\ 
 c(L) \ &\deq \ \max \big \{ i \mid \text{$\^h^i$ is not identically zero in any neighborhood of $[L]\in N^1(X)_\R$} \big \},
\end{align*}
where  $A_1,\dots,A_n$ are  very ample divisors on $X$. The minimum of all  $a(L;A_1,\dots,A_n)$ (over all sequences of very ample divisors of length $n=\dim X$) is  closely related to $\sigma_+(L)$ defined in \cite{DPS}.

The quantities  $q(L)$, $a(L,A)$, $b(L)$,  and $c(L)$ depend only on the numerical equivalence class of $L$, and make good sense for $\QQ$-divisors as well. They all express how far a given divisor is from being ample, with smaller numbers corresponding to more positivity.  

\begin{cor}\label{cor:ineq}
With notation as above, 
\[
 c(L) \dleq \ q(L) \dleq a(L;A_1,\dots,A_n) \dleq b(L)
\]
for all sequences of very ample divisors $A_1,\dots,A_n$.
\end{cor}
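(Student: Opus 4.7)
The plan is to verify the chain $c(L) \leq q(L) \leq a(L; A_1,\dots,A_n) \leq b(L)$ one link at a time, drawing on a different tool for each: the openness of the $q$-ample cone for the first, Theorem~\ref{thm:A} for the middle, and the defining property of the augmented base locus for the last.

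For $c(L) \leq q(L)$, I would start from the observation that a $q(L)$-ample divisor $L$, applied to $\shf = \OO_X$, satisfies $\HH{i}{X}{\OO_X(mL)} = 0$ for all $i > q(L)$ and $m \gg 0$, hence $\hhat{i}{X}{L} = 0$. By Totaro's openness theorem the $q(L)$-ample cone is an open neighborhood of $[L]$ in $N^1(X)_\RR$; on this neighborhood the same vanishing holds for every integral class, so $\widehat{h}^i$ is identically zero there (passing from integer to rational classes by its $n$-homogeneity, and extending to $\RR$-classes via the continuity properties established in \cite{dFKL,ACF}). This witnesses $c(L) \leq q(L)$.

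For $q(L) \leq a(L; A_1,\dots,A_n)$, set $k \deq a(L; A_1,\dots,A_n)$, so $L|_{E_1 \cap \dots \cap E_k}$ is ample for very general $E_i \in |A_i|$. Since ampleness of the restriction is an open condition on $|A_1| \times \dots \times |A_k|$, it upgrades from ``very general'' to ``general'' here, and Theorem~\ref{thm:A} directly produces $q$-ampleness of $L$ with $q = k$.

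For $a(L; A_1,\dots,A_n) \leq b(L)$, let $b \deq \dim \Bplus(L)$. A Bertini-style dimension count shows that for general $E_i \in |A_i|$ with $1 \leq i \leq b$, the intersection $\Bplus(L) \cap E_1 \cap \dots \cap E_b$ is empty. By the very definition of the augmented base locus, one can write $L = A + D$ (up to $\QQ$-linear equivalence, with suitably small rational coefficients), where $A$ is an ample $\QQ$-divisor and $D$ is an effective $\QQ$-divisor supported on $\Bplus(L)$; restricting to $E_1 \cap \dots \cap E_b$ then kills $D$ and leaves the ample class $A|_{E_1 \cap \dots \cap E_b}$. This gives $a(L; A_1,\dots,A_n) \leq b$.

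The main obstacle I anticipate is the first link: to conclude that $\widehat{h}^i$ vanishes on a full $\RR$-neighborhood of $[L]$ from its vanishing on integral classes in the open $q$-ample cone, one must invoke the appropriate continuity (or semicontinuity) behaviour of higher asymptotic cohomology on $N^1(X)_\RR$. The remaining two inequalities are almost immediate consequences of Theorem~\ref{thm:A} and of standard properties of $\Bplus$.
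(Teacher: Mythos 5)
Your first two links are sound and essentially coincide with the paper's argument. For $c(L)\leq q(L)$ the paper likewise combines the definition of $\widehat{h}^i$ with the openness of $q$-ampleness in $N^1(X)_\RR$; your extra care (vanishing at integral classes in the open $q$-ample neighborhood, then homogeneity for $\QQ$-classes and continuity of asymptotic cohomology for $\RR$-classes) correctly fills in what the paper leaves implicit. For $q(L)\leq a(L;A_1,\dots,A_n)$ the paper also quotes Theorem~\ref{thm:A} (alternatively \cite[3.4]{DPS}); your upgrade from ``very general'' to ``general'' via openness of amplitude in families is legitimate and in fact needed, since $a(L;A_1,\dots,A_n)$ is defined with very general members while Theorem~\ref{thm:A} asks for general ones.

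The third link is where your proposal has genuine gaps, two of them. First, the Bertini count is wrong: $b$ general members of very ample systems cannot evacuate a $b$-dimensional closed set. An ample divisor meets every positive-dimensional closed subvariety, so when $\dim\Bplus(L)=b\geq 1$ the set $\Bplus(L)\cap E_1\cap\dots\cap E_b$ is a \emph{nonempty} finite set; emptiness would require $b+1$ cuts, so your scheme could at best give $a\leq b+1$. Second, the decomposition you invoke does not exist: it is not true that $L$ is $\QQ$-linearly equivalent to $A+D$ with $A$ ample and $D$ effective with $\supp D\subseteq\Bplus(L)$. The correct statement runs the other way: $\Bplus(L)$ is the \emph{intersection} of $\supp D$ over all such decompositions, so every available $D$ has support \emph{containing} $\Bplus(L)$, typically with extra divisorial components; and when $\Bplus(L)$ has codimension at least $2$ --- for instance $L$ the pullback of an ample divisor under a small contraction of a single curve on a threefold, where $\Bplus(L)$ is that curve --- there is no nonzero effective divisor supported inside $\Bplus(L)$ at all. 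The paper's proof avoids both problems by arguing dynamically rather than statically: by Lemma~\ref{lem:bsl}, Corollary~\ref{cor:bsl} and Remark~\ref{rmk:augm} one has $\Bplus\bigl(L|_{E_1\cap\dots\cap E_k}\bigr)\subseteq\Bplus\bigl(L|_{E_1\cap\dots\cap E_{k-1}}\bigr)\cap E_k$, so each general very ample cut strictly reduces the dimension of the augmented base locus \emph{of the restricted divisor} --- a set that can shrink much faster than the fixed intersection $\Bplus(L)\cap E_1\cap\dots\cap E_b$ you track --- and once that locus is empty the restriction is ample. (One further subtlety, present in both routes and compressed in the paper's phrase ``strictly reduces'': to close the count at $b$ rather than $b+1$ one needs that a nonempty zero-dimensional augmented base locus cannot occur, i.e.\ that $\Bplus$ has no isolated points.)
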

\begin{proof}
The first inequality comes from observing  the definition of $\^h^i$ and the openness of $q$-ampleness in the N\'eron--Severi space. The second one is \cite[3.4]{DPS}, at the same time, it is immediate from Theorem~\ref{thm:A}.

The inequality $a(L,A) \le b(L)$ comes from  the observation that the
restriction of a Cartier divisor to a general very ample divisor strictly
reduces  $\dim \Bplus(L)$, and the fact that a
divisor with empty augmented base locus is ample. 
\end{proof}

As it was noticed on \cite[p. 167.]{DPS}, one does not have equality in  $q(L)\deq a(L;A_1,\dots,A_n)$ in general. Here we present another simple example (borrowed again from an early version of \cite{dFKL}) exhibiting this property. More precisely, we give an example of a divisor $L$ that is $1$-ample, and a very ample divisor $A$ such that $L|_E$ is not ample for general $E\in|A|$.

\begin{eg}\label{eg:c(L)<a(L,A)<b(L)} Let $X = \F_1 \times \P^1$, where $\F_1$ is the blow-up of $\P^2$ at a point, and denote by $p : X \to
\F_1$ and $q : X \to \P^1$ the two projections. Let $E \subset \F_1$ be
the exceptional curve of the blow-up,  $F \subset \F_1$ be a fiber of the ruling, and let
$H \subset \P^1$ be a point. We consider the divisors
$$ L = p^*(\lambda E + F) + q^*H \ \text{ and } \ A = p^*(E + \mu F) + q^*H \quad
\text{for some} \quad \lambda, \mu \in\ZZ_{\ge 2}.
$$ 
Note that $A$ is very ample and $L$ is big. The stable  base locus of $L$ coincides with its augmented base locus and is equal to
$B \deq p^{-1}(E)$. In particular $b(L) = 2$.

On the other hand, the K\"unneth's formula for asymptotic cohomology (see
\cite[Remark~2.14]{ACF}), and the fact that $L$ is not ample imply that
$c(L) = 1$. Fix a general element $Y \in |A|$ cutting out a smooth
divisor $D$ on $B$. Note that
$\OO_B(D) \cong \OO_{\P^1\times\P^1}(\mu-1,1)$ via the isomorphism $B = E
\times \P^1 \cong \P^1 \times \P^1$. Therefore, since $D$ is smooth, it
must be irreducible; moreover,
$p$ induces an isomorphism $D \cong \P^1$.
We observe that the base locus
of $L|_Y$ is contained in the restriction of the base locus of $L$, hence
in $D$, and that $\OO_D(L|_Y) \cong \OO_{\P^1}(\mu - \lambda)$. We conclude that
$$ a(L,A) \ = \
\begin{cases} 2 = b(L) &\text{if $\lambda \ge \mu$}, \\ 1 = c(L) &\text{if $\lambda
< \mu$}.
\end{cases}
$$
For $\lambda <\mu$, we have  $1=c(L)\leq q(L)\leq a(L;A)=1$, therefore $q(L)=1$. On the other hand  there exist very ample divisors on $X$ such that $L|_A$ is not ample.
\end{eg}

  Totaro asks in \cite[Question 12.1]{Totaro} whether  $c(L)=q(L)$ always holds.  As a result of the discussion so far, we can see that $q(L)$ and $c(L)$ behave much the same way in relation to $a(L;A_1,\dots,A_n)$, which furnishes some evidence that the answer to Totaro's question is affirmative.

We give a reformulation of these concepts in terms of cones of divisors. This leads to a  connection with movable curves as defined in  \cite{BDPP}.   In what follows $A_1,\dots,A_q$  will without exception denote a sequence of very ample divisors. 

\begin{defi} For a natural number $q$ and very ample divisors $A_1,\dots, A_q$ set
\[
 \shc_{A_1,\dots,A_q} \deq \st{\alpha\in N^1(X)_\RR \,|\,\,\, \alpha|_{E_1\cap\dots\cap E_q} \text{ ample for $E_i\in |A_i|$ general }}\ .
\]
In addition, let $\Amp_q(X)$ denote the open (but not necessarily convex) cone of $q$-ample divisor classes.
\end{defi}

It is immediate that $\shc_{A_1,\dots,A_n}\subseteq N^1(X)_\RR$ is a convex  cone, it is also open by \cite[Section 9]{Totaro}; for simplicity we set $\shc_{\emptyset}=\Amp(X)$.

\begin{rmk}
In the important special case $q=n-1$ a general codimension $n-1$ complete intersection is an irreducible curve, and  one has 
\[
 \shc_{A_1,\dots,A_{n-1}} \equ \st{\alpha\in N^1(X)_\RR\,|\, (\alpha\cdot A_1\cdot\dots\cdot A_{n-1})>0}\ .
\]
\end{rmk}

Corollary~\ref{cor:ineq}  can then be rephrased in the following way.

\begin{cor}\label{cor:containment}
\[
 \bigcup_{A_1,\dots,A_q} \shc_{A_1,\dots,A_q} \dsubseteq \Amp_q(X) 
\]
\end{cor}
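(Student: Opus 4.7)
The plan is to reduce the inclusion to the integer case, where Theorem~\ref{thm:A} gives the result immediately, and then promote it to arbitrary real classes by openness and density.

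First I would consider an integral Cartier divisor $L$ whose numerical class lies in $\shc_{A_1,\dots,A_q}$. By the definition of that cone, $L|_{E_1\cap\dots\cap E_q}$ is ample for general $E_j\in|A_j|$, which is precisely the hypothesis of Theorem~\ref{thm:A}. The theorem then yields the naive $q$-ampleness of $L$, so $[L]\in\Amp_q(X)$. Because $\shc_{A_1,\dots,A_q}$ is a cone, the same conclusion applies to every rational class it contains, by passing to a suitable integer multiple and using that $q$-ampleness is preserved under positive integer scaling.

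Next, to extend the containment to a general real class $\alpha\in\shc_{A_1,\dots,A_q}$, I would exploit the fact that both cones are open in $N^1(X)_\RR$: the cone $\shc_{A_1,\dots,A_q}$ is open by \cite[Section 9]{Totaro} as noted in the text, while $\Amp_q(X)$ is open by the theorem of Totaro recalled above. Choose an open neighbourhood $U$ of $\alpha$ contained in $\shc_{A_1,\dots,A_q}$. Rational classes are dense in $U$, and by the previous step every such class belongs to $\Amp_q(X)$. The equivalence between naive and uniform $q$-ampleness (condition (3) of the Totaro theorem quoted) exhibits $q$-ampleness as a numerical and perturbation-stable condition on $N^1(X)_\RR$, which is exactly what is needed to promote the containment at the rational classes of $U$ to a containment at $\alpha$ itself.

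The substantive content is carried entirely by Theorem~\ref{thm:A}; the only potential obstacle is the final bookkeeping to upgrade the containment from rational to real classes, and this is handled uniformly by Totaro's openness result in the N\'eron--Severi space.
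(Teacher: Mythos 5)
Your proposal is correct and takes essentially the same route as the paper: the paper offers no separate argument for Corollary~\ref{cor:containment}, presenting it as a rephrasing of Corollary~\ref{cor:ineq}, whose relevant inequality $q(L)\leq a(L;A_1,\dots,A_n)$ is immediate from Theorem~\ref{thm:A} --- exactly the engine of your argument. The rational-to-real bookkeeping you supply (scaling to integral classes, then invoking Totaro's openness and the uniform characterization of $q$-ampleness) is precisely what the paper leaves implicit, so it is a careful elaboration rather than a different approach.
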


\begin{rmk}
The question arises naturally  whether the two sides of Corollary~\ref{cor:containment} are equal in general. This is quickly seen to be true on surfaces. By \cite[Theorem 9.1]{Totaro} $L$ is $1$-ample if and only if $-L$ is not pseudoeffective, which  is equivalent to the existence of a very ample divisor $A$ for which $(-L\cdot A)<0$. This latter holds precisely when  $L|_E$ is ample  for $E\in |A|$ general.

In general one obstruction that is easy to  foresee is the existence of  movable  curves on $X$ that are not in the boundary of the cone spanned by complete intersection curves. 
\end{rmk}

\begin{prop}\label{prop:equ}
Let $X$ be an irreducible projective variety of dimension $n$. Then 
\[
 \bigcup_{A_1,\dots,A_{n-1}} \shc_{A_1,\dots,A_{n-1}} \equ \Amp_{n-1}(X) 
\]
exactly if  every movable curve is the limit of elements of the convex cone spanned by complete intersection curves coming from very ample divisors. 
\end{prop}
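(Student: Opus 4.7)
The plan is to translate both sides of the desired equality into statements about intersection with curves. By the remark preceding the proposition,
\[
\shc_{A_1,\dots,A_{n-1}} \equ \st{\alpha\in N^1(X)_\RR \,|\, (\alpha\cdot A_1\cdot\dots\cdot A_{n-1})>0}\ ,
\]
so $L\in\bigcup_{A_1,\dots,A_{n-1}}\shc_{A_1,\dots,A_{n-1}}$ precisely when some complete intersection curve built from very ample divisors pairs positively with $L$. Meanwhile Totaro's theorem characterizes $\Amp_{n-1}(X)$ as $N^1(X)_\RR\setminus(-\PsEff(X))$, and the Boucksom--Demailly--Paun--Peternell duality \cite{BDPP} identifies $\Mov(X)\subseteq N_1(X)_\RR$ as the dual of $\PsEff(X)$. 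Combining these yields the clean dictionary: $L\in\Amp_{n-1}(X)$ if and only if $(L\cdot\gamma)>0$ for some movable class $\gamma$.

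For the ``if'' direction, I assume every movable curve is a limit of non-negative combinations of complete intersection curves coming from very ample divisors. Given $L\in\Amp_{n-1}(X)$, I produce a movable $\gamma$ with $(L\cdot\gamma)>0$ and approximate $\gamma$ by $\gamma_k=\sum_i a_{k,i}C_{k,i}$ with $a_{k,i}\geq 0$ and each $C_{k,i}$ a complete intersection curve. Continuity of the intersection pairing on the finite-dimensional space $N_1(X)_\RR$ gives $(L\cdot\gamma_k)>0$ for $k\gg 0$, whence at least one summand $C_{k,i}$ satisfies $(L\cdot C_{k,i})>0$. This places $L$ in the left-hand union; the reverse containment is Corollary~\ref{cor:containment}.

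For the ``only if'' direction, I argue by contrapositive. If some movable class $\gamma_0$ lies outside the closure $\shk$ of the convex cone spanned by complete intersection curves coming from very ample divisors, then since $\shk$ is a closed convex cone in the finite-dimensional space $N_1(X)_\RR$, the Hahn--Banach separation theorem produces $L\in N^1(X)_\RR$ with $(L\cdot\gamma_0)>0$ and $(L\cdot\gamma')\leq 0$ for every $\gamma'\in\shk$. The first inequality places $L$ in $\Amp_{n-1}(X)$ by the dictionary above, while the second precludes $L\in\shc_{A_1,\dots,A_{n-1}}$ for any very ample $A_1,\dots,A_{n-1}$, since $A_1\cdot\dots\cdot A_{n-1}\in\shk$. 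Thus the assumed equality fails.

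The step requiring most care is the separation argument, where one must invoke that the intersection pairing $N^1(X)_\RR\times N_1(X)_\RR\to\RR$ is perfect, so that separating functionals are realized by honest divisor classes; everything else is essentially formal once Totaro's characterization of $(n-1)$-ampleness and the BDPP duality are in hand.
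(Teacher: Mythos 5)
Your proof is correct and follows essentially the same route as the paper: translate membership in $\shc_{A_1,\dots,A_{n-1}}$ into positivity against complete intersection curves, invoke Totaro's Theorem~9.1 to identify $\Amp_{n-1}(X)$ with the complement of $-\PsEff(X)$, and use the duality of \cite{BDPP} to reduce the equality to density of the complete intersection cone in the movable cone. The only difference is presentational: you make explicit the Hahn--Banach separation and the perfectness of the intersection pairing, which the paper compresses into the phrase ``via \cite{BDPP}''.
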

\begin{proof}
For a Cartier divisor $L$ on $X$, $L|_{E_1\cap\dots\cap E_{n-1}}$ is ample if and only if  $(L\cdot A_1\cdot\dots\cdot A_{n-1})>0$. Consequently, 
$L\in\shc_{A_1,\dots,A_{n-1}}$ if and only if $(L\cdot A_1\cdot\dots\cdot A_{n-1})>0$. Consequently,  $L\in \cup_{A_1,\dots,A_{n-1}} \shc_{A_1,\dots,A_{n-1}}$ holds exactly  if there exists a complete intersection curve coming from very ample divisors intersected by $L$ positively.

By \cite[Theorem 9.1]{Totaro}, a  line bundle is $(n-1)$-ample precisely if  $-L$ is not pseudo-effective. The equality of the Proposition is equivalent  via \cite{BDPP} to the property that  a divisor intersecting every complete intersection curve of very ample divisors positively necessarily  intersects every movable curve positively. This happens precisely if  the cone spanned by complete intersection curves on $X$ is  dense in the cone of moving curves. 
\end{proof}

\begin{eg} In \cite[Example 3.2.4]{Neu} Neumann constructs a smooth projective threefold $X$ on which the cone spanned by complete intersection curves is not dense in the movable cone. The space he constructs is a double blow-up of $\PP^3$: first one blows up a line in $\PP^3$, then a point on the exceptional divisor. The work \cite{Neu} gives all the details. By Proposition~\ref{prop:equ}
\[
 \bigcup_{A_1,\dots,A_q} \shc_{A_1,\dots,A_{n-1}} \,\subsetneq\, \Amp_{n-1}(X)\ .
\]
\end{eg}

\begin{question}
Let $X$ be an irreducible projective variety. Under what condition does
\[
 \bigcup_{A_1,\dots,A_q} \shc_{A_1,\dots,A_q} \equ \Amp_q(X) 
\]
hold for all $0\leq q\leq n-1$?
\end{question}

\section{A Kawamata--Viehweg type vanishing for non-pseudo-effective divisors}

Independently of the discussion so far, we show that the ideas leading to Theorem~\ref{thm:A} also provide a partial vanishing theorem for adjoint divisors $K_X+L$  where $L$ is not necessarily  pseudo-effective. 

It has been common knowledge that cohomology groups of big line bundles tend to vanish  in degrees roughly  above the dimension of the stable base locus (see \cite[Proposition 2.15]{ACF} for an early example). Matsumura  in  \cite[Theorem 1.6]{Mats} proved that 
\[
 \HH{i}{X}{\OO_X(K_X+L)} \equ 0 \ \ \text{for $i> \dim B_{-}(L)$}
\]
for a big line bundle $L$.

In Theorem~\ref{thm:B} we present a variant which works without the bigness assumption, and in addition provides vanishing  in a  wider range of degrees  thanks  to the fact that $a(L,A)$ can be strictly  smaller than the dimension of the stable base locus  (see Example~\ref{eg:c(L)<a(L,A)<b(L)}). 

The proof of Theorem~\ref{thm:B}  requires a certain resolution defined in \cite[Section 4]{ACF}. Let $D$ be an arbitrary integral Cartier divisor, $A$ a very ample Cartier divisor on an irreducible projective variety $X$ of dimension $n$. Upon choosing general elements $E_1,\dots ,E_r\in |A|$, one obtains an exact sequence 
\begin{eqnarray}\label{eqn:res}
&& 0 \lra \OO_X(D) \lra \OO_X(D+rA) \lra \bigoplus_{i=1}^{r}\OO_{E_i}(D+rA) \lra \\
&&  \lra \bigoplus_{1\leq i_1<i_2\leq r}\OO_{E_{i_1}\cap E_{i_2}}(D+rA) \lra \dots\lra \bigoplus_{1\leq i_1<i_2<\dots<i_n\leq r}\OO_{E_{i_1}\cap\dots\cap E_{i_n}}(D+rA) \lra 0\ . \nonumber
\end{eqnarray}
Given a coherent sheaf $\shf$ one can also assume that  the sequence \eqnref{eqn:res} remains exact after tensoring by  $\shf$ by the general position of the effective divisors $E_i$.

Although strictly speaking it would not be necessary, it helps in the book-keeping process to chop up the above resolution into short exact sequences 
\begin{eqnarray*}
&& 0 \to \shf\otimes\OO_X(D) \to \shf\otimes\OO_X(D+rA) \to \shc_1 \to  0 \\
&& 0 \to \shc_1 \to \bigoplus_{i=1}^{r}\shf\otimes\OO_{E_i}(D+rA) \to \shc_2 \to 0 \\
&& \vdots \\
&& 0 \to \shc_{n-1} \to  \bigoplus_{1\leq i_1<i_2<\dots<i_{n-1}\leq r}\shf\otimes\OO_{E_{i_1}\cap\dots\cap E_{i_{n-1}}}(D+rA)  \\
&& \to  \bigoplus_{1\leq i_1<i_2<\dots<i_n\leq r}\shf\otimes\OO_{E_{i_1}\cap\dots\cap E_{i_n}}(D+rA) \to 0\ .
\end{eqnarray*}

\begin{thm}\label{thm:B}
 Let $X$ be a smooth projective variety, $L$ a divisor, $A$ a very ample divisor on $X$. If $L|_{E_1\cap\dots\cap E_q}$ is big and nef for a general choice of $E_1,\dots,E_q\in |A|$, then 
\[
 \HH{i}{X}{\OO_X(K_X+L)} \equ 0 \ \ \ \text{for $i>q$.}
\]
\end{thm}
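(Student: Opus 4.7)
The plan is to induct on $k$, in each step slicing $X$ by a single general hyperplane section $E\in|A|$---that is, using only the simplest instance ($r=1$) of resolution \eqref{eqn:res}. The base case $k=0$ is immediate from Kawamata--Viehweg: the hypothesis reads ``$L$ itself is big and nef'', and since $X$ is smooth projective one has $\HHv{i}{X}{\OO_X(K_X+L)}=0$ for all $i\geq 1$.

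For the inductive step, assume the statement for $k-1$ on every smooth projective variety and fix a general $E_1\in|A|$. Bertini ensures that $E_1$ is smooth and $A|_{E_1}$ is still very ample; the generality of $E_2,\dots,E_k$ then means that $E_2|_{E_1},\dots,E_k|_{E_1}$ form a general $(k-1)$-tuple in $|A|_{E_1}|$ whose common zero locus on $E_1$ equals $Y_k\deq E_1\cap\dots\cap E_k$. Since $(L|_{E_1})|_{Y_k}=L|_{Y_k}$ is big and nef by hypothesis, the inductive hypothesis applies to the quadruple $(E_1,L|_{E_1},A|_{E_1},k-1)$ and yields $\HHv{i}{E_1}{\OO_{E_1}(K_{E_1}+L|_{E_1})}=0$ for $i>k-1$. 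Adjunction $K_{E_1}=(K_X+A)|_{E_1}$ rewrites this as
\[
\HHv{i}{E_1}{\OO_{E_1}(K_X+L+A)}\equ 0\quad\text{for all }i\geq k.
\]

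Feeding this into the long exact cohomology sequence of
\[
0\to\OO_X(K_X+L)\to\OO_X(K_X+L+A)\to\OO_{E_1}(K_X+L+A)\to 0
\]
makes the two bounding $E_1$-terms vanish whenever $i>k$, producing an isomorphism $\HHv{i}{X}{\OO_X(K_X+L)}\cong\HHv{i}{X}{\OO_X(K_X+L+A)}$. The hypothesis of the theorem is preserved when we replace $L$ by $L+A$, since $(L+A)|_{Y_k}$ is the sum of a big and nef divisor and an ample one, hence still big and nef (in fact ample). Iterating the inductive argument $m$ times yields $\HHv{i}{X}{\OO_X(K_X+L)}\cong\HHv{i}{X}{\OO_X(K_X+L+mA)}$ for every $m\geq 0$ and every $i>k$. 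For $m\gg 0$ the class $L+mA$ lies in the (open) ample cone of $N^1(X)_\RR$, and Kodaira vanishing annihilates the right-hand side.

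The only real bookkeeping issue---and hence the main obstacle---is verifying that the generality assumption on the tuple $E_1,\dots,E_k$ descends to $E_1$, so that $E_2|_{E_1},\dots,E_k|_{E_1}$ actually constitutes a general tuple in $|A|_{E_1}|$ and the inductive hypothesis may legitimately be invoked on the datum $(E_1,L|_{E_1},A|_{E_1},k-1)$. This is dealt with by choosing $E_1$ first and the remaining $E_j$ afterwards; the persistence of the big-and-nef condition under $L\mapsto L+mA$ is then automatic.
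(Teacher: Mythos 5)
Your argument is correct, and it takes a genuinely leaner route than the paper's. The paper fixes all $q$ general divisors at once and runs the full Koszul-type resolution \eqnref{eqn:res} with $r=q$, chopping it into short exact sequences with cokernel sheaves $\shc_j^{(m)}$ and proving $\HHv{i}{X}{\shc_j^{(m)}}=0$ for $i>q-j$ by descending induction on $j$, using exactly the adjunction computation you use on the complete intersections $E_1\cap\dots\cap E_j$; this produces the isomorphisms $\HHv{i}{X}{\OO_X(K_X+L+mA)}\cong\HHv{i}{X}{\OO_X(K_X+L+(m+q)A)}$ for all $m\geq 0$ at once, and Serre vanishing finishes. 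You instead use only the $r=1$ restriction sequence, make the theorem itself the inductive statement (induction on $k=q$, base case Kawamata--Viehweg, same as the paper's), and recover the isomorphisms with step $1$ by iterating after replacing $L$ by $L+A$ --- legitimate, since big and nef plus ample remains big and nef on the relevant complete intersections --- before invoking Kodaira for $L+mA$ with $m\gg0$. What your version buys is the elimination of all bookkeeping with the sheaves $\shc_j^{(m)}$: the auxiliary vanishing on $E_1$ is literally the inductive hypothesis. What the paper's heavier setup buys is reusability: the resolution \eqnref{eqn:res} and the structure of that proof are re-run verbatim with pullbacks in Corollary~\ref{cor:birational vanishing}, which is the form actually needed for Theorem~\ref{thm:Fujita}.

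One inaccuracy you should repair, precisely at the point you flag as the main obstacle: the restrictions $E_2|_{E_1},\dots,E_k|_{E_1}$ of general members of $|A|$ are general members of the \emph{trace} of $|A|$ on $E_1$, i.e.\ of the linear subsystem corresponding to the image of $\HHv{0}{X}{\OO_X(A)}\to\HHv{0}{E_1}{\OO_{E_1}(A)}$. This restriction map need not be surjective (its cokernel injects into $\HHv{1}{X}{\OO_X}$; take $X$ an abelian surface), so the restricted tuple is in general \emph{not} a general tuple in the complete system $|A|_{E_1}|$, and choosing $E_1$ first does not change this. Hence the inductive hypothesis, as you quantify it (over general members of the complete system on $E_1$), is not literally verified. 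The standard fix is to strengthen the induction: carry along a fixed very ample linear subsystem $V\subseteq\HHv{0}{X}{\OO_X(A)}$ and assume $L$ big and nef on general complete intersections of members of $\PP(V)$. The trace of $V$ on $E_1$ is again a very ample subsystem, the projection $\PP(V)\dashrightarrow\PP(V|_{E_1})$ is dominant so generality transfers, Bertini for base-point-free systems still yields smooth irreducible sections, and every other step of your argument goes through unchanged. (To be fair, the paper's own inductive appeal on $E_1\cap\dots\cap E_j$ elides exactly the same point, so this is a patch, not a refutation.)
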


\begin{proof}
We will prove the statement by induction on the codimension of the complete intersections we restrict to;  the case $q=0$ is the  Kawamata--Viehweg vanishing theorem.  Let $E_1,\dots,E_q\in |A|$ be elements such that the intersection of any combination of them is smooth of the expected dimension, and irreducible when it has positive dimension. As the $E_i$'s are assumed to be general, this can clearly be done via  the base-point free Bertini theorem, which works for $\dim X\geq 2$. In the remaining cases (when $\dim X\leq 1$) the statement of the proposition is immediate.

Consider the exact sequence \eqnref{eqn:res} with $D=K_X+L+mA$ and  $r=q$.
First we show that it suffices to verify
\[
 \HH{i}{X}{\shc_1^{(m)}} \equ 0 \ \ \ \text{for all $m\geq 0$ and $i>q-1$},
\]
where the upper index of $\shc$ is used  to emphasize the explicit dependence on $m$.  Grant this for the moment, and  see how this helps up to prove the statement of the proposition. 

Take  the following part of the long exact sequence associated to the first piece above
\begin{eqnarray*}
 \HH{i-1}{X}{\shc_1^{(m)}} & \to &  \HH{i}{X}{\OO_X(K_X+L+mA)} \\
&& \to \HH{i}{X}{\OO_X(K_X+L+(m+q)A)} \to \HH{i}{X}{\shc_1^{(m)}}\ .
\end{eqnarray*}
By assumption the cohomology groups on the two sides vanish for all $m$ whenever $i>q$, hence 
\[
\HH{i}{X}{\OO_X(K_X+L+mA)} \simeq \HH{i}{X}{\OO_X(K_X+L+(m+q)A)}  \ \ \ \text{for all $m\geq 0$ and $i>q$.} 
\]
These groups are zero however for $m$ sufficiently large by Serre vanishing, hence 
\[
\HH{i}{X}{\OO_X(K_X+L)} \equ 0 \ \ \ \text{ for all $i>q$,}
\]
as we wanted.

As for  the vanishing of the cohomology groups $\HH{i}{X}{\shc_1^{(m)}}$ for $m\geq 0$ and $i>q-1$, it is quickly checked inductively.
Observe that for all $1\leq j\leq q$ we have
\[
 K_X+L+(m+q)A|_{E_1\cap\dots\cap E_j} \equ K_{E_1\cap\dots\cap E_j} + (L+(m+(q-j))A)|_{E_1\cap\dots\cap E_j}
\]
by adjunction, and $(L+(m+(q-j))A)|_{E_1\cap\dots\cap E_j}$  becomes ample when restricted to the intersection with $E_{j+1}\cap\dots\cap E_q$. 
Induction gives
\[
 \HH{i}{E_1\cap\dots\cap E_j}{K_X+L+(m+q)A|_{E_1\cap\dots\cap E_j}} \equ 0 \ \ \ \text{for all $m\geq 0$ and $i>q-j$}\ .
\]
By chasing through the appropriate long exact sequences, we arrive at 
\[
 \HH{i}{X}{\shc_j^{(m)}} \equ 0 \ \ \ \text{for all $m\geq 0$ and $i>q-j$.}
\]
For $j=1$ this is the required vanishing. 
\end{proof}

\begin{cor}\label{cor:birational vanishing}
Let $X$ be an irreducible projective variety, $L$ a line bundle, $A$ a very ample line bundle on $X$, $q \geq 0$ such that $L|_{E_1\cap\dots\cap E_q}$ is big and nef for $E_i\in |A|$ general ($1\leq i\leq q$). If $\pi:Y\to X$ is a proper birational morphism from a smooth variety, $B$ a nef divisor on $Y$, then
\[
 \HH{i}{Y}{\OO_Y(K_Y+\pi^*L+B)} \equ 0
\]
for all $i>q$.
\end{cor}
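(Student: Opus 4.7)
The plan is to proceed by induction on $q$, mimicking the hyperplane-cutting scheme in the proof of Theorem~\ref{thm:B} but lifted to the smooth birational model $Y$. The base case $q = 0$ is immediate: the hypothesis reduces to $L$ being big and nef on $X$, so $\pi^*L + B$ is big and nef on $Y$ (pullback of big and nef under a proper birational morphism preserves both properties, and adding a nef divisor preserves them), and the classical Kawamata--Viehweg vanishing theorem on the smooth variety $Y$ delivers $\HH{i}{Y}{\OO_Y(K_Y + \pi^*L + B)} = 0$ for all $i > 0$.

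For the inductive step ($q \geq 1$) I would fix a general $E \in |A|$ and consider its pullback $\tilde E \deq \pi^*E$, an effective Cartier divisor on $Y$ linearly equivalent to $\pi^*A$. Since $|A|$ is base-point-free, so is the sub-linear-system $\pi^*|A|$ on $Y$, and Bertini (in characteristic zero, on the smooth variety $Y$) ensures that $\tilde E$ is smooth and irreducible; the induced $\pi_E : \tilde E \to E$ is then a proper birational morphism from a smooth variety to the irreducible projective variety $E$. The original hypothesis restricts to give $(L|_E)|_{E_2 \cap \dots \cap E_q}$ big and nef for general $E_j \in |A|_E|$ with $A|_E$ very ample, so the corollary for $q-1$ applies to $(E, L|_E, A|_E, \pi_E, B|_{\tilde E})$ and yields
\[
\HH{i}{\tilde E}{\OO_{\tilde E}\bigl(K_{\tilde E} + \pi_E^*(L|_E) + B|_{\tilde E}\bigr)} \equ 0 \quad \text{for } i > q - 1.
\]

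Using adjunction $K_{\tilde E} = (K_Y + \tilde E)|_{\tilde E}$ and $\pi_E^*(L|_E) = (\pi^*L)|_{\tilde E}$, the long exact sequence associated to
\[
0 \to \OO_Y(K_Y + \pi^*L + B) \to \OO_Y(K_Y + \pi^*L + B + \tilde E) \to \OO_{\tilde E}(K_{\tilde E} + \pi_E^*(L|_E) + B|_{\tilde E}) \to 0
\]
yields the isomorphism $\HH{i}{Y}{\OO_Y(K_Y + \pi^*L + B)} \cong \HH{i}{Y}{\OO_Y(K_Y + \pi^*L + B + \pi^*A)}$ for every $i > q$, since both neighbouring cohomology groups on $\tilde E$ vanish by the above. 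Iterating (replacing $B$ by $B + k\pi^*A$, which remains nef at each step) one obtains the same isomorphism with $\pi^*A$ replaced by $k\pi^*A$ for every $k \geq 0$. For $k$ sufficiently large $L + kA$ is ample on $X$, so $\pi^*(L + kA) + B$ is big and nef on $Y$, and classical Kawamata--Viehweg gives $\HH{i}{Y}{\OO_Y(K_Y + \pi^*(L+kA) + B)} = 0$ for all $i > 0$; combining this with the iterated isomorphism forces the desired vanishing for $i > q$. The main technical obstacle is the Bertini argument guaranteeing that $\tilde E = \pi^*E$ is smooth and irreducible on $Y$ despite the possible singularities of $X$, which is precisely what smoothness of $Y$ and base-point-freeness of $\pi^*|A|$ provide.
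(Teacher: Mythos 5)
Your proposal is correct and follows essentially the same route as the paper: it uses the Bertini-type fact that general members of $\pi^*|A|$ are smooth, irreducible, and map birationally to general members of $|A|$ (the content of Lemma~\ref{lem:pullback}), climbs the adjunction/hyperplane-section ladder to get the isomorphisms $\HHv{i}{Y}{\OO_Y(K_Y+\pi^*L+B+k\pi^*A)}\cong \HHv{i}{Y}{\OO_Y(K_Y+\pi^*L+B+(k+1)\pi^*A)}$ for $i>q$, and finishes exactly as the paper does by replacing Serre vanishing with Kawamata--Viehweg applied to the big and nef divisor $\pi^*(L+kA)+B$. The only difference is organizational: you induct on $q$ by peeling off one general hyperplane at a time and applying the statement itself (with the nef twist $B$ built into the induction) to $\pi_E:\tilde E\to E$, whereas the paper re-runs the $q$-divisor Koszul-type resolution from the proof of Theorem~\ref{thm:B} on $Y$; these amount to the same ladder of short exact sequences.
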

\begin{proof}
This is in fact a corollary of the proof of Theorem~\ref{thm:B}. We point out the necessary modifications. Assuming $\dim X\geq 2$, Lemma~\ref{lem:pullback} makes sure that we can consider restrictions  to intersections of  general elements of $|\pi^*A|$ just as we did in the proof of Theorem~\ref{thm:B}; we also obtain that the generic restriction  $\pi^*L|_{E_1'\cap\dots\cap E_q'}$ is still big and nef for  $E_i'\in |\pi^*A|$ general. 

Next, run the proof on $Y$, with $D=K_Y+\pi^*L+D+m\pi^*A$, and $r=q$. The task that remains is to show that the cohomology groups
\[
\HH{i}{Y}{\OO_Y(K_Y+\pi^*L+D+m\pi^*A)} \simeq \HH{i}{Y}{\OO_X(K_Y+\pi^*L+D+(m+q)\pi^*A)}  
\]
vanish for all $m\geq 0$ and $i>q$. By the given isomorphisms, it suffices to prove this for $m\gg 0$. Serre vanishing no longer applies, since $\pi^*A$ is only big and nef; luckily we can use the classical Kawamata--Viehweg theorem  to our advantage. Namely, observe that 
\[
 \pi^*L+D+m\pi^*A \equ \pi^*(L+m_0A)+D+(m-m_0)\pi^*A
\]
for all integers $m,m0$  with $m\geq m_0$. If $m_0$ is suitably large then $L+m_0A$ itself is ample, therefore $\pi^*(L+m_0A)+D+(m-m_0)\pi^*A$ is big and nef, and the  required vanishing follows. 
\end{proof}

\begin{lem}\label{lem:pullback}
 Let $\pi:Y\to X$ a proper birational morphism of irreducible projective varieties of dimension $n\geq 2$, $L$ a Cartier divisor, $A$ a very ample Cartier divisor on $X$. If $L|_{E_1\cap\dots\cap E_k}$ is big and nef for some $k\geq 1$ and general elements $E_1,\dots,E_k$, then $\pi_*L|_{E_1'\cap\dots\cap E_k'}$ is big and nef  for  the same integer $k$, and $E_1',\dots, E_k'$ general elements from $|\pi^*A|$.  
\end{lem}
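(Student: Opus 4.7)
My plan is to identify $E_i'$ explicitly as pullbacks of general members of $|A|$, verify the big-and-nef property for these distinguished choices, and then upgrade the conclusion to generic members of $|\pi^*A|$.

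First I will record that every exceptional prime divisor $F$ of $\pi$ has image $\pi(F)$ of codimension $\geq 2$ in $X$ (reducing to the case where $X$ is normal via Zariski's main theorem if needed). Consequently, for a general $E \in |A|$ the subvariety $E$ contains no such $\pi(F)$, so the Cartier divisor $\pi^*E$ on $Y$ picks up no exceptional component and equals the strict transform $\hat E$. Moreover $\pi|_{\hat E} \colon \hat E \to E$ is a proper birational morphism of irreducible projective varieties of dimension $n-1$.

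Next I would take a general $k$-tuple $E_1, \dots, E_k \in |A|$ and set $V := E_1 \cap \dots \cap E_k$ and $V' := \hat E_1 \cap \dots \cap \hat E_k$. By Bertini, $V$ is irreducible of dimension $n-k$, and by hypothesis $L|_V$ is big and nef. General position of the $E_i$ (using $n \geq 2$, so $\dim V \geq 1$ intersects the codimension-$\geq 2$ locus $\pi(\mathrm{Exc}(\pi))$ in a proper subvariety) forces $V'$ to be the strict transform of $V$: irreducible of dimension $n-k$, with $\pi|_{V'} \colon V' \to V$ a proper birational morphism. Setting $E_i' := \pi^*E_i \in |\pi^*A|$, one then has $V' = E_1' \cap \dots \cap E_k'$ and
\[
\pi^*L|_{V'} \equ (\pi|_{V'})^*(L|_V).
\]
Since pullback under a proper birational morphism of irreducible projective varieties of the same dimension preserves nefness (tautologically) and bigness (volume being a birational invariant), the big-and-nef property transfers from $L|_V$ to $\pi^*L|_{V'}$.

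The main obstacle will be matching ``general in $|\pi^*A|$'' with ``general in the sublinear system $\pi^*|A| \subseteq |\pi^*A|$''. The cleanest resolution is to reduce to the case $X$ normal, where Zariski's main theorem yields $\pi_*\OO_Y = \OO_X$ and hence, via the projection formula, $H^0(Y,\pi^*A) = H^0(X,A)$, so in fact $|\pi^*A| = \pi^*|A|$ and the two notions of ``general'' coincide. Alternatively one can argue directly in the universal family of intersections parametrized by (an open subset of) $|\pi^*A|^k$: the top self-intersection of $\pi^*L$ restricted to the fiber is constant, and upper semicontinuity of cohomology combined with the fixed bundle $\pi^*L$ on $Y$ spreads the big-and-nef property from the constructible subset cut out by $\pi^*|A|^k$ to a dense Zariski-open subset of $|\pi^*A|^k$.
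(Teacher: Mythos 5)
Your proposal is correct and follows essentially the same route as the paper's own proof: identify the general complete intersections of $|\pi^*A|$ with strict transforms of general complete intersections of $|A|$, observe that $\pi$ restricts to a proper birational morphism $V'\to V$, and transfer the big-and-nef property through $\pi^*L|_{V'}=(\pi|_{V'})^*(L|_V)$ --- the paper invokes the base-point-free Bertini theorem where you use the strict-transform analysis, and it silently uses $|\pi^*A|=\pi^*|A|$ (i.e.\ $\pi_*\OO_Y=\OO_X$), a point you commendably make explicit via normality. One caveat: your fallback ``spreading'' argument is dispensable and wrong-directional as stated, since upper semicontinuity of $h^0$ cannot propagate positivity from the \emph{closed}, nowhere dense linear subspace $\pi^*|A|^k\subseteq|\pi^*A|^k$ to a general point (one would instead need constancy of the top self-intersection number together with a Hilbert-scheme countability argument for nefness), so you should rely on the normality reduction, exactly as the paper implicitly does.
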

\begin{proof}
 As $\pi^*A$ is big and globally generated and $\dim Y\geq 2$, a general element $E'\in |\pi^*A|$ maps to a general element of $|A|$ by the base-point free Bertini theorem. Moreover, by the same token,  the intersection $E_1'\cap\dots\cap E_k'$ of general elements $|\pi^*A|$ is irreducible, and  $\pi|_{E_1'\cap\dots\cap E_k'}$ is a proper birational morphism onto its image, which is the intersection of $k$ general elements of $|A|$, say $E_1\cap\dots\cap E_k$ (we tacitly assume that $E_i'$ maps to $E_i$ under $\pi$). 

Consequently, $\pi^*(L|_{E_1\cap\dots\cap E_k})$ is a big and nef divisor on $E_1'\cap\dots\cap E_k'$. However,
\[
 \pi^*(L|_{E_1\cap\dots\cap E_k}) \equ (\pi^*L)|_{E_1'\cap\dots\cap E_k'}\ , 
\]
hence the latter is big and nef as we wanted. 
\end{proof}

\section{Base loci on schemes}

Here we treat base loci of line bundles on arbitrary schemes. Although a large part of what we do is straightforward, the topic has not been investigated much so far, and there is no  suitable reference available. In the course of this section $X$ is an arbitrary scheme unless otherwise mentioned. 

It is customary (see \cite[Section 1.1.B]{PAG} for example) to define  the base ideal sheaf  of a Cartier divisor  $L$ on a complete  algebraic scheme  to be 
\[
\mathfrak{b}(L) \deq  \im \left(  \HH{0}{X}{\OO_X(L)} \otimes_\CC \OO_X(-L) \stackrel{\text{eval}_L}{\lra} \OO_X  \right)\ .
\]
One then sets $\Bs(L)$ to be the closed subscheme of $X$ given by $\mathfrak{b}(L)$, and 
\[
\B(L) \deq  \bigcap_{m=1}^{\infty} \Bs(mL)_{\text{red}} \dsubseteq X
\]
as a closed subset.  We can nevertheless define the base locus of an invertible sheaf in full generality.

\begin{defi}\label{defi:general}
Let $X$ be a scheme, $\shl$ an invertible sheaf on $X$. Let $\shf_\shl$ denote the quasi-coherent subsheaf of $\shl$ generated by $\HH{0}{X}{\shl}$. With this notation set 
\[
\mathfrak{b}(\shl) \deq \ann_{\OO_X} (\shl/\shf_\shl)\ , 
\]
define $\Bs(\shl)$ to be the closed subscheme corresponding to $\mathfrak{b}(\shl)$, and let 
\[
\B(\shl) \deq  \bigcap_{m=1}^{\infty} \Bs(\shl^{\otimes m})_{\text{red}} \dsubseteq X
\]
as a closed subset of the topological space associated to $X$.
\end{defi}

It is immediate that we recover the usual definition in the case $X$ is complete and algebraic.

\begin{lem}\label{lem:bsl}
Let $X,Y$ be  schemes, $f:Y\to X$ a map of schemes, $\shl$ an invertible sheaf on $X$. Then
\[
 \mathfrak{b} (\shl) \cdot \OO_Y \dsubseteq \mathfrak{b} (f^*\shl)\ .
\]
In particular, if $Y\subseteq X$ is a closed subscheme, then  $\Bs(\shl|_Y) \subseteq \Bs(\shl)\cap Y$.
\end{lem}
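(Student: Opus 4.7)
The plan is to verify the containment locally in trivializations of $\shl$, where the annihilator definition of $\mathfrak{b}(\shl)$ collapses to something transparent. Fix an open $U\subseteq X$ on which $\shl$ is trivial and choose an isomorphism $\shl|_U\simeq \OO_U$. Under this identification the subsheaf $\shf_\shl|_U$ corresponds to an ideal sheaf $\J_U\subseteq\OO_U$, namely the sheaf of ideals generated by the images of the restriction map $H^0(X,\shl)\to H^0(U,\OO_U)$; since $\ann_{\OO_U}(\OO_U/\J_U)=\J_U$, we get $\mathfrak{b}(\shl)|_U=\J_U$. In short, under any local trivialization the ideal $\mathfrak{b}(\shl)$ is literally the subsheaf generated by (restrictions of) global sections of $\shl$.

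Now pass to $f^{-1}(U)\subseteq Y$. The chosen trivialization induces $f^*\shl|_{f^{-1}(U)}\simeq\OO_{f^{-1}(U)}$, and for every $s\in H^0(X,\shl)$ the pullback $f^*s\in H^0(Y,f^*\shl)$ corresponds, in the trivialization, to $s|_U\circ f$. Hence the subsheaf of $\OO_{f^{-1}(U)}$ generated by $\{f^*s : s\in H^0(X,\shl)\}$ is precisely $\mathfrak{b}(\shl)|_U\cdot\OO_{f^{-1}(U)}$. This is contained in $\shf_{f^*\shl}|_{f^{-1}(U)}$, since the latter is generated by the \emph{larger} set $H^0(Y,f^*\shl)$. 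Converting back to ideals via the trivialization of $f^*\shl$ yields $\mathfrak{b}(\shl)\cdot\OO_Y\subseteq\mathfrak{b}(f^*\shl)$ on $f^{-1}(U)$, and since opens of this form cover $Y$, the global containment follows.

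For the particular case of a closed immersion $i\colon Y\hookrightarrow X$, taking vanishing loci reverses inclusions, so from $\mathfrak{b}(\shl)\cdot\OO_Y\subseteq\mathfrak{b}(i^*\shl)$ we obtain $\Bs(\shl|_Y)=V(\mathfrak{b}(i^*\shl))\subseteq V(\mathfrak{b}(\shl)\cdot\OO_Y)=\Bs(\shl)\cap Y$ as topological subspaces of $Y$. I do not anticipate a genuine obstacle here; the only item requiring care is the trivialization bookkeeping, i.e.\ checking that under $\shl|_U\simeq\OO_U$ the ideal $\mathfrak{b}(\shl)|_U$ really is the sheaf of ideals generated by restrictions of global sections — and this reduces to the elementary identity $\ann_{\OO_U}(\OO_U/\J_U)=\J_U$.
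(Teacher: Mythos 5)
Your proof is correct and is in substance the paper's own argument: both hinge on the observation that pullbacks of global sections of $\shl$ land in the subsheaf $\shf_{f^*\shl}$ generated by all of $H^0(Y,f^*\shl)$, combined with the fact that for a subsheaf of an invertible sheaf the annihilator construction $\shf\mapsto\ann_{\OO_X}(\shl/\shf)$ is locally just ``take the corresponding ideal.'' The only difference is presentational: the paper argues globally, via monotonicity of $\shi(\shf)=\ann_{\OO_X}(\shl/\shf)$ and the factoring of $f^*(\shf_\shl)\to f^*\shl$ through $\shf_{f^*\shl}$, whereas you carry out the same computation in local trivializations --- which has the mild virtue of explicitly justifying the identification $\mathfrak{b}(\shl)\cdot\OO_Y=\shi\left(\im f^*(\shf_\shl)\right)$ that the paper uses without comment.
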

\begin{proof}\footnote{I would like to thank Brian Conrad for simplifying a previous argument considerably and pointing out the right degree of generality here and in Definition~\ref{defi:general}.}
Observe that to any quasi-coherent subsheaf $\shf$  of an invertible sheaf $\shl$ one can associate a (quasi-coherent) sheaf of ideals
\[
 \shi(\shf) \deq \ann_{\OO_X} \shl/\shf \ . 
\]
The definition implies that $\shi(\shf)\subseteq \shi(\shf')$ whenever $\shf\subseteq\shf'$.

Considering the map $\HH{0}{X}{\shl}\to\HH{0}{Y}{f^*\shl}$ obtained by pulling back sections, one observes that the map 
\[
 f^*(\shf_\shl) \lra f^*\shl
\]
factors through the sheaf of modules $\shf_{f^*\shl}$. Consequently, 
\[
\mathfrak{b}(\shl)\cdot\OO_Y \equ  \shi(\im f^*(\shf_\shl)) \dsubseteq \shi(\shf_{f^*\shl}) \equ \mathfrak{b}(f^*\shl)\  
\] 
as we wanted.
\end{proof}

\begin{cor}\label{cor:bsl} 
In the  situation of Lemma~\ref{lem:bsl} one has 
\[
 \dim \B(\shl|_Y) \dleq  \dim \B(\shl)\ ,
\]
which   immediately extends to the case of  $\QQ$-Cartier divisors.
\end{cor}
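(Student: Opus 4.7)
The proof should be essentially immediate from Lemma~\ref{lem:bsl} applied to every tensor power of $\shl$, plus a passage to a common Cartier multiple to handle the $\QQ$-Cartier case. Here is how I would organize it.

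First I would fix the closed immersion $Y \hookrightarrow X$ and note that Lemma~\ref{lem:bsl}, applied to the invertible sheaf $\shl^{\otimes m}$ for each $m \geq 1$, yields
\[
\Bs\bigl(\shl^{\otimes m}|_Y\bigr) \;\subseteq\; \Bs\bigl(\shl^{\otimes m}\bigr) \cap Y.
\]
Taking reductions and intersecting over all $m$ commutes with intersecting with the closed subset $Y$, so
\[
\B(\shl|_Y) \;=\; \bigcap_{m\geq 1} \Bs\bigl(\shl^{\otimes m}|_Y\bigr)_{\red} \;\subseteq\; \bigcap_{m\geq 1} \Bs\bigl(\shl^{\otimes m}\bigr)_{\red} \cap Y \;=\; \B(\shl) \cap Y.
\]
In particular $\B(\shl|_Y) \subseteq \B(\shl)$, and the dimension inequality follows at once from the fact that dimension is monotone with respect to inclusions of closed subsets.

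For the $\QQ$-Cartier extension, given a $\QQ$-Cartier divisor $D$ on $X$, I would choose a positive integer $k$ such that $kD$ is Cartier; by definition of $\B$ for $\QQ$-divisors one has $\B(D) = \B(kD)$ (and this is independent of the choice of $k$, since replacing $k$ by a multiple does not change the intersection defining $\B$). Applying the Cartier case to the line bundle $\OO_X(kD)$ and its restriction to $Y$ then gives $\B(D|_Y) = \B(kD|_Y) \subseteq \B(kD) \cap Y = \B(D) \cap Y$, and the dimension bound is preserved.

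There is no real obstacle here; the only point requiring a moment of care is the commutation of intersection (over $m$) of reduced closed subschemes with restriction to the closed subset $Y$, which holds because intersection of underlying closed subsets distributes over intersection with a fixed closed subset in the Zariski topology of $X$.
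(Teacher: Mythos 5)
Your proof is correct and takes essentially the same route the paper intends: the corollary is left without a written proof precisely because it follows immediately from Lemma~\ref{lem:bsl} applied to each power $\shl^{\otimes m}$, giving $\B(\shl|_Y)\subseteq \B(\shl)\cap Y$ and hence the dimension bound, with the $\QQ$-Cartier case handled exactly as you do, via a Cartier multiple and the independence $\B(\shl)=\B(\shl^{\otimes m})$ noted in the subsequent remark. Your extra care about intersecting over $m$ commuting with restriction to $Y$ is harmless but unnecessary, since the containment of underlying closed subsets is immediate.
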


\begin{rmk} We point out that the proofs of both \cite[Example 1.1.9]{PAG} and \cite[Proposition 2.1.21]{PAG}  go through unchanged when $X$ is an arbitrary  scheme. This holds since both proofs depend only on the property
\[
 \mathfrak{b}(\shl^{\otimes m}) \cdot \mathfrak{b}(\shl^{\otimes k}) \dsubseteq \mathfrak{b}(\shl^{\otimes(m+k)}) \ \text{for all $m,k\geq 1$}\ ,
\]
which follows from the fact that one can multiply global sections.  Therefore we obtain that $\B(\shl)$ is the unique minimal member of the collection of closed subsets $\st{\Bs(\shl^{\otimes m})_{\text{red}}\,|\, m\geq 1}$. Moreover, just as in the reduced and irreducible case there exists $m_0\in \NN$ with the property that $\B(\shl) = \Bs(\shl^{\otimes pm_0})_{\text{red}}$ for all natural numbers $p$.

As a consequence, $\B(\shl) = \B(\shl^{\otimes m})$ for all positive integers $m$, and we are allowed to define the stable base locus for $\QQ$-Cartier divisors by taking the stable base locus of a Cartier multiple.  
\end{rmk}

Following \cite[Remark 1.3]{AIBL}, we define the augmented base locus of a $\QQ$-Cartier divisor  $L$ via
\[
 \Bplus (L) \deq \bigcap_{A} \B(L-A)	
\]
where $A$ runs through all ample $\QQ$-Cartier divisors. 

\begin{rmk}\label{rmk:augm}
Let us assume that $X$ is  complete and algebraic over $\CC$. Arguing as in the proof of \cite[Proposition 1.5]{AIBL} we can see that for a given $\QQ$-divisor $L$ one can always find an $\epsilon>0$ 
such that 
\[
 \Bplus (L) \equ \B(L-A)
\]
for any ample $\QQ$-divisor with $\| A \| <\epsilon$ (with respect to an arbitrary norm on the N\'eron--Severi space). Exploiting Corollary~\ref{cor:bsl}  this implies that 
\[
 \dim \Bplus(L|_Y) \dleq  \dim \Bplus(L)
\]
for any closed subscheme $Y$ in $X$.
\end{rmk}

The main contribution of this section is  a generalization of the fact that any coherent sheaf becomes globally generated after twisting with a high enough multiple of an ample line bundle. 

\begin{prop}\label{prop:resolution}
Let $X$ be an irreducible projective variety, $L$ a Cartier divisor on $X$.  Then $\Bplus(L)$ is the smallest  subset of $X$ such that for all coherent sheaves $\shf$ on $X$  there exists a possibly infinite sequence of sheaves of the form
\[
\dots \to  \bigoplus_{i=1}^{r_i} \OO_X(-m_iL) \to\dots\to \bigoplus_{i=1}^{r_1} \OO_X(-m_1L) \to \shf\ ,
\]
which is exact off $\Bplus(L)$.
\end{prop}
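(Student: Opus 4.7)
The plan is to establish the two halves of the claim in turn: first, that every coherent sheaf on $X$ admits a resolution of the asserted form exact off $\Bplus(L)$; second, that $\Bplus(L)$ is minimal for this property.

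For existence I would reduce to the following key fact: \emph{for every coherent sheaf $\shg$ on $X$ there exist positive integers $m$ and $N$, and a morphism $\OO_X^{\oplus N}\to\shg\otimes\OO_X(mL)$ which is surjective on the open set $U:=X\setminus\Bplus(L)$.} Once this is available, twisting by $\OO_X(-mL)$ yields a morphism $\OO_X(-mL)^{\oplus N}\to\shg$ surjective on $U$; starting with $\shg=\shf$ and applying the fact iteratively to the successive kernels (which remain coherent since $X$ is Noetherian) then produces the desired possibly infinite resolution.

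To prove the key fact I would exploit the description of $\Bplus(L)$ from Remark~\ref{rmk:augm}. Fix an ample Cartier divisor $A$ on $X$; then $\Bplus(L)\subseteq\B(L-\epsilon A)$ for all $\epsilon>0$, with equality for sufficiently small rational $\epsilon$. Clearing denominators, there exists $m_0\geq 1$ such that the base locus of $\OO_X(m_0L-A)$ is contained in $\Bplus(L)$; in particular one finds global sections $s_1,\dots,s_r\in\HH{0}{X}{\OO_X(m_0L-A)}$ with common zero locus inside $\Bplus(L)$. These assemble into a morphism $\OO_X(A)^{\oplus r}\to\OO_X(m_0L)$ that is surjective on $U$. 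Taking $k$-th tensor powers (which preserve surjectivity of sheaf maps) yields $\OO_X(kA)^{\oplus r^k}\to\OO_X(km_0L)$ surjective on $U$; tensoring with $\shg$ gives $\shg\otimes\OO_X(kA)^{\oplus r^k}\to\shg\otimes\OO_X(km_0L)$, still surjective on $U$. Since $A$ is ample, Serre's theorem guarantees that for $k$ large enough there is a surjection $\OO_X^{\oplus M}\twoheadrightarrow\shg\otimes\OO_X(kA)$ defined on all of $X$. Composing, we obtain a morphism $\OO_X^{\oplus Mr^k}\to\shg\otimes\OO_X(km_0L)$ surjective on $U$, as required.

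For minimality, suppose $W\subseteq X$ is any subset such that every coherent sheaf admits a resolution of the stated form exact off $W$; I would show $\Bplus(L)\subseteq W$ by contradiction. Assume $x\in\Bplus(L)\setminus W$, fix any ample Cartier divisor $A$ on $X$, and apply the hypothesis to $\shf:=\OO_X(-A)$. The resulting map $\bigoplus_i\OO_X(-m_iL)\to\OO_X(-A)$ must be surjective at $x$, and it is assembled from global sections $s_i\in\HH{0}{X}{\OO_X(m_iL-A)}$; surjectivity at $x$ forces some $s_{i_0}$ to be nonzero at $x$, and since $\HH{0}{X}{\OO_X(-A)}=0$ we may take $m_{i_0}\geq 1$. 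Therefore $x\notin\Bs(m_{i_0}L-A)$, whence $x\notin\B(m_{i_0}L-A)=\B(L-A/m_{i_0})$ by invariance of stable base loci under positive rational scaling. But $A/m_{i_0}$ is ample $\QQ$-Cartier, so the definition of $\Bplus(L)$ gives $\Bplus(L)\subseteq\B(L-A/m_{i_0})$, contradicting $x\in\Bplus(L)$.

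The main obstacle is the key fact in the existence step: one has to bridge the statement ``sections of $\OO_X(m_0L-A)$ generate off $\Bplus(L)$'' with ``an arbitrary coherent sheaf admits a surjection onto some twist by a multiple of $L$'', which is accomplished by combining the tensor-power construction with Serre's theorem applied to the auxiliary ample divisor $A$; one should verify carefully that tensor powers of a surjective morphism of coherent sheaves remain surjective on $U$, a consequence of the right exactness of tensor product.
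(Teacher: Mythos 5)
Your proposal is correct and follows essentially the same route as the paper: both reduce to the key claim that any coherent sheaf $\shg$ admits a map $\bigoplus\OO_X(-mL)\to\shg$ surjective off $\Bplus(L)$ (using $\Bplus(L)=\B(L-\epsilon A)$ for a small ample perturbation together with Serre's theorem for the auxiliary ample $A$), then iterate on successive kernels, and prove minimality by testing the sheaf $\OO_X(-A)$, whose twists $\OO_X(mL-A)$ have all global sections vanishing at any point of $\Bplus(L)$. The only cosmetic difference is mechanical: the paper produces the key surjection by multiplying global sections via $\HH{0}{X}{\shf\otimes\OO_X(m'A)}\otimes\HH{0}{X}{\OO_X(mL')}\lra\HH{0}{X}{\shf\otimes\OO_X(mL)}$, whereas your tensor-power construction composed with a Serre surjection onto $\shg\otimes\OO_X(kA)$ accomplishes exactly the same thing.
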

\begin{proof}
If $L$ is a non-big divisor, then $\Bplus(L)=X$, and the statement is obviously true. Hence we can assume without loss of generality that $L$ is big.

First we prove the following claim: let $\shf$ be an arbitrary coherent sheaf on $X$; then there exist  positive integers $r$,$m$, and a map of sheaves
\begin{equation}\label{eqn:first}
 \bigoplus_{i=1}^{r} \OO_X(-mL) \stackrel{\phi}{\lra} \shf\ , 
\end{equation}
which is surjective away from $\Bplus (L)$.  

Fix an arbitrary ample divisor $A$ on $X$. The sheaves $\shf\otimes\OO_X(m'A)$ are globally generated for $m'$ sufficiently large. According to \cite[Proposition 1.5]{AIBL} $\Bplus(L)=\B (L-\epsilon A)$ for any $\epsilon>0$ small enough. Pick such an $\epsilon$, set $L'\deq L-\epsilon A$ and let $m\gg 0$ be a positive integer such that $m'\deq m\epsilon$ is an integer, and  
\[
 \Bs(mL') \equ \B (mL') \equ \Bplus (L)\ .
\]
By picking $m$ large enough, we can in addition assume that   $\shf\otimes\OO_X(m'A)$ is globally generated. As a consequence,
\[
 \shf\otimes\OO_X(m'A)\otimes\OO_X(mL') \simeq  \shf\otimes\OO_X(m'A+mL') 
\]
is globally generated away from $\Bs(mL')=\Bplus (L)$. On the other hand 
\[
mL'+m'A \equ m(L-\epsilon A)+(m\epsilon)A \equ mL\ ,
\]
hence we have found $m\gg 0$ such that $\shf\otimes\OO_X(mL)$ is globally generated away from $\Bplus (L)$. Thanks to the map
\[
 \HH{0}{X}{\shf\otimes\OO_X(m'A)}\otimes \HH{0}{X}{\OO_X(mL')} \lra \HH{0}{X}{\shf\otimes\OO_X(mL)}
\]
one can find a finite set of sections giving rise to a map of sheaves
\[
 \bigoplus_{i=1}^{r}\OO_X \lra \shf\otimes\OO_X(mL)
\]
surjective away from $\Bplus (L)$. Tensoring by $\OO_X(-mL)$ gives the map in \eqnref{eqn:first}.

Next we will prove that $\Bplus(L)$ satisfies that property described in the Proposition. Let $\shg$ be the kernel of the map $\bigoplus_{i=1}^{r_1} \OO_X(-m_1L) \stackrel{\phi_1}{\lra} \shf$ coming from \eqnref{eqn:first}.
Applying \eqnref{eqn:first} to $\shg$ we obtain a map
\[
 \bigoplus_{i=1}^{r_2} \OO_X(-mL) \stackrel{\phi_2}{\lra} \shg
\]
surjective off $\Bplus (L)$, hence a two-term sequence 
\[
\bigoplus_{i=1}^{r_2} \OO_X(-m_2L) \to \bigoplus_{i=1}^{r_1} \OO_X(-m_1L) \to \shf 
\]
exact away from the closed subset $\Bplus (L)$. Continuing in this fashion we arrive at a possibly infinite sequence of the required type.

Last, if $x\in \Bplus(L)$ then for all $\epsilon\in\QQ^{\geq 0}$ and all $m\geq 1$ such that $m\epsilon\in\ZZ$, all global sections of $\OO_X(m(L-\epsilon A))$ vanish at $x$. By taking $\shf\deq \OO_X(A)$, 
\[
 \shf\otimes \OO_X(mL) \equ \OO_X(mL-A)
\]
will then have all global sections vanishing at $x\in X$. Therefore $\Bplus(L)$ is indeed the smallest subset of $X$  with the required property.
\end{proof}

\section{Generalized Fujita vanishing}

We prove  a generalization of Fujita's vanishing theorem (see \cite{Fuj} or \cite[Theorem 1.4.35]{PAG}) following  his  original line of thought. The main technical tools we will rely on are Proposition~\ref{prop:resolution} and  Corollary~\ref{cor:birational vanishing}.

\begin{thm}\label{thm:Fujita}
Let $X$ be a complex projective scheme, $L$ a Cartier divisor, $\shf$ a coherent sheaf on $X$. Then there exists a positive integer $m_0(L,\shf)$ such that 
\[
 \HH{i}{X}{\shf\otimes\OO_X(mL+D)} \equ 0
\]
 for all $i>\dim \Bplus (L)$, $m\geq m_0(L,\shf)$, and all nef divisors $D$ on $X$.
\end{thm}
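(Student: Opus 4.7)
The plan is to mimic Fujita's original argument, substituting Proposition~\ref{prop:resolution} for "ampleness $+$ Serre vanishing" and Corollary~\ref{cor:birational vanishing} for Kawamata--Viehweg vanishing. The proof divides into three stages.

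\emph{Reductions.} If $L$ is not big then $\Bplus(L)=X$, so the vanishing is Grothendieck's theorem; hence assume $L$ big, and set $d\deq\dim\Bplus(L)<n=\dim X$. Standard Noetherian dévissage (filter $\shf$ by associated primes so that successive quotients are of the form $j_*\OO_Z$ for integral closed $Z\subseteq X$), together with induction on $\dim X$ using the bound $\dim\Bplus(L|_Z)\leq d$ from Remark~\ref{rmk:augm}, reduces us to the case where $X$ is an irreducible projective variety. A resolution of singularities $\pi\colon Y\to X$ together with Grauert--Riemenschneider then reduces to the case $X$ smooth.

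\emph{Resolution.} On smooth $X$ apply Proposition~\ref{prop:resolution} to the coherent sheaf $\shf\otimes\omega_X^{-1}$ to obtain
\[
\dots\to F_j\to\dots\to F_0\to\shf\otimes\omega_X^{-1}
\]
with $F_j=\bigoplus_k\OO_X(-m_{j,k}L)$, exact off $\Bplus(L)$. Tensor with $\omega_X\otimes\OO_X(mL+D)$ to produce the corresponding sequence for $\shf\otimes\OO_X(mL+D)$, whose $j$-th term is $\bigoplus_k\omega_X\otimes\OO_X((m-m_{j,k})L+D)$. The error kernels and cokernels are supported on $\Bplus(L)$ and hence have vanishing cohomology in degrees $>d$ by Grothendieck; truncating at the $(n-d)$-th syzygy makes the tail cohomology sit in degrees $>n$, also killed by Grothendieck. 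A diagram chase through the cohomology long exact sequences reduces the claim to the adjoint statement
\[
H^i\bigl(X,\ \omega_X\otimes\OO_X(m'L+D)\bigr)\equ 0,\qquad i>d,\ m'\gg 0,\ D\text{ nef},
\]
for finitely many residues $m'=m-m_{j,k}$.

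\emph{Adjoint vanishing and main obstacle.} Apply Corollary~\ref{cor:birational vanishing} with $Y=X$ and $\pi=\mathrm{id}$, the input divisor taken as $m'L$ and $B=D$. Its hypothesis that $L|_{E_1\cap\dots\cap E_d}$ be big and nef for general $E_j\in|A|$ is verified by iterated general slicing: a general very ample divisor drops $\dim(\Bplus(L)\cap\cdot)$ by one at each step, so after $d$ cuts $\Bplus(L|_{\cap E_j})\subseteq\Bplus(L)\cap\bigcap E_j$ is at most $0$-dimensional, and a big divisor on a projective variety whose augmented base locus contains no curves is automatically nef. The main obstacles are: first, Stage 1, where passing from a general projective scheme $X$ to a smooth irreducible projective variety demands careful use of resolution of singularities and Grauert--Riemenschneider while keeping $m_0$ uniform in the nef divisor $D$; and second, the "adjoint vs.\ non-adjoint" mismatch of Stage 2, which is the reason for the preliminary twist by $\omega_X^{-1}$ before applying Proposition~\ref{prop:resolution}, so that after untwisting the terms of the resolution line up with the adjoint form of the vanishing supplied by Corollary~\ref{cor:birational vanishing}.
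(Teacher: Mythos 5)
Your Stages 2 and 3 are essentially sound: on a \emph{smooth} irreducible variety, resolving $\shf\otimes\omega_X^{-1}$ by Proposition~\ref{prop:resolution}, chasing the truncated complex with Grothendieck vanishing on the $\Bplus(L)$-supported homology, and feeding the resulting finitely many adjoint twists into Corollary~\ref{cor:birational vanishing} (with $\pi=\id$, hypothesis checked by slicing, exactly as in Corollary~\ref{cor:ineq}) is a correct and even slightly cleaner packaging of the paper's endgame, and the uniformity in $D$ is automatic since Corollary~\ref{cor:birational vanishing} carries no threshold in the nef summand. The genuine gap is Stage 1: the assertion that ``a resolution of singularities $\pi\colon Y\to X$ together with Grauert--Riemenschneider reduces to the case $X$ smooth'' has no mechanism behind it for an \emph{arbitrary} coherent sheaf. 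Grauert--Riemenschneider only controls $R^j\pi_*\OO_Y(K_Y)$; it transfers adjoint cohomology from $Y$ down to $X$, not the cohomology of $\shf\otimes\OO_X(mL+D)$ up to $Y$. If you instead try to work with $\pi^*\shf$ upstairs, the relevant divisor on $Y$ is $\pi^*L$, and $\Bplus(\pi^*L)$ contains the entire exceptional locus (every $\pi$-contracted curve has degree zero against $\pi^*L$), so $\dim\Bplus(\pi^*L)$ is typically $n-1$ no matter how small $d=\dim\Bplus(L)$ is; the theorem on the smooth model then yields vanishing only for $i>n-1$. A Leray/projection-formula comparison does not repair this: the terms $H^p\bigl(X,R^q\pi_*(\pi^*\shf)\otimes\OO_X(mL+D)\bigr)$ with $p\leq d$ and $q>0$ need not vanish, so vanishing of $H^i(Y,\pi^*\shf\otimes\pi^*(\cdot))$ does not force vanishing of $H^i(X,\pi_*\pi^*\shf\otimes(\cdot))$.

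The paper avoids this by never reducing to a smooth ambient variety. It stays on the (reduced, irreducible, possibly singular) $X$, applies Proposition~\ref{prop:resolution} directly to $\shf$ to reduce to the single sheaf $\OO_X(aL)$, and then uses bigness of $L$ to choose $a\gg 0$ so that $\OO_{\tilde{X}}(\mu^*(aL)-K_{\tilde{X}})$ has a section, producing an injection $\shk_X\deq\mu_*\OO_{\tilde{X}}(K_{\tilde{X}})\hookrightarrow\OO_X(aL)$ whose cokernel is supported on a proper subscheme and is disposed of by the induction on dimension via $\dim\Bplus(L|_Z)\leq\dim\Bplus(L)$ (Remark~\ref{rmk:augm}). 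Only for the specific sheaf $\shk_X$ does Grauert--Riemenschneider apply, identifying $\HHv{i}{X}{\shk_X\otimes\OO_X(aL+D)}$ with $\HHv{i}{\tilde{X}}{\OO_{\tilde{X}}(K_{\tilde{X}}+\mu^*(aL+D))}$, and Corollary~\ref{cor:birational vanishing} then finishes --- crucially, its positivity hypothesis concerns restrictions of $L$ to general complete intersections \emph{on the base} $X$, so the exceptional locus never enters. This injection $\shk_X\hookrightarrow\OO_X(aL)$, plus doing the $\OO_X(aL)$-reduction \emph{before} invoking any resolution, is precisely the missing idea in your Stage 1. (A minor additional inaccuracy: the d\'evissage quotients are pushforwards of rank-one torsion-free sheaves on integral subschemes, not of the form $j_*\OO_Z$; the paper's Lemma~\ref{lem:reduction} performs this reduction correctly via the nilpotent filtration and irreducible components.)
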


First we  reduce the theorem  to the case of irreducible projective varieties.

\begin{lem}\label{lem:reduction}
With notation as above, if Theorem~\ref{thm:Fujita} holds when $X$ is reduced and irreducible, then it is true in general. 
\end{lem}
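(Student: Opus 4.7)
The plan is to perform a two-stage dévissage: first kill the nilpotents of $\OO_X$, and then induct on the number of irreducible components of the resulting reduced scheme.

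First, I would reduce to the case where $X$ is reduced. Let $\mathcal{N} \subseteq \OO_X$ be the nilradical, chosen so that $\mathcal{N}^{k+1} = 0$ for some $k$, and consider the finite decreasing filtration
$$0 \equ \mathcal{N}^{k+1}\shf \dsubseteq \mathcal{N}^{k}\shf \dsubseteq \cdots \dsubseteq \mathcal{N}\shf \dsubseteq \shf.$$
Each successive quotient $\mathcal{N}^{j}\shf/\mathcal{N}^{j+1}\shf$ is annihilated by $\mathcal{N}$, hence is naturally a coherent sheaf on $\xred$. Tensoring each short exact sequence $0 \to \mathcal{N}^{j+1}\shf \to \mathcal{N}^{j}\shf \to \mathcal{N}^{j}\shf/\mathcal{N}^{j+1}\shf \to 0$ with the invertible sheaf $\OO_X(mL+D)$ preserves exactness; the long exact sequences in cohomology, together with a finite downward induction on $j$ starting from $\mathcal{N}^{k+1}\shf = 0$, reduce the vanishing for $\shf$ to the corresponding vanishing of each quotient, viewed as a sheaf on $\xred$. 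Since $\xred \to X$ is a homeomorphism, $\dim \Bplus(L|_{\xred}) \equ \dim \Bplus(L)$, and $D|_{\xred}$ remains nef, so we may assume $X$ is reduced.

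Next, assume $X$ is reduced and induct on the number $r$ of its irreducible components; the case $r = 1$ is the lemma's hypothesis. For $r \geq 2$ write $X = X_1 \cup Y$ where $X_1$ is one irreducible component and $Y$ is the union of the remaining components, each endowed with its reduced induced structure. Since $X$ is reduced and $X = X_1 \cup Y$, the ideal sheaves of these closed subschemes satisfy $\mathcal{I}_{X_1} \cap \mathcal{I}_Y = 0$, and consequently $\mathcal{I}_Y \cdot \mathcal{I}_{X_1} \dsubseteq \mathcal{I}_{X_1} \cap \mathcal{I}_Y = 0$. Thus $\mathcal{I}_{X_1}\shf$ is annihilated by $\mathcal{I}_Y$ and is a coherent sheaf on $Y$. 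Twisting the short exact sequence
$$0 \to \mathcal{I}_{X_1}\shf \to \shf \to \shf|_{X_1} \to 0$$
by $\OO_X(mL+D)$ and passing to the long exact sequence in cohomology reduces the vanishing for $\shf$ to the analogous vanishing for $\shf|_{X_1}$ on the irreducible projective scheme $X_1$ (where the lemma's hypothesis applies directly) and for $\mathcal{I}_{X_1}\shf$ on $Y$, which has $r - 1$ components (where the inductive hypothesis applies). Restrictions of a nef divisor are nef, and by Remark~\ref{rmk:augm} both $\dim \Bplus(L|_{X_1})$ and $\dim \Bplus(L|_{Y})$ are bounded above by $\dim \Bplus(L)$. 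Taking $m_0(L,\shf)$ to be the maximum of the constants produced by the two applications yields the required uniform bound.

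The main subtlety is ensuring that $m_0$ be uniform in the nef divisor $D$: a priori each step of the dévissage could require its own threshold depending on $D$. This is handled automatically because the form of Theorem~\ref{thm:Fujita} we are assuming on irreducible reduced schemes already builds uniformity in $D$ into its threshold, and because both the filtration by powers of $\mathcal{N}$ and the induction on components produce only finitely many coherent quotient sheaves, so that a maximum over the associated integers can be taken.
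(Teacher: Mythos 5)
Your proposal is correct and follows essentially the same two-stage d\'evissage as the paper's own proof: the filtration of $\shf$ by powers of the nilradical to reduce to $\xred$, then induction on irreducible components via the sequence $0\to\mathcal{I}_{X_1}\shf\to\shf\to\shf|_{X_1}\to 0$, with the restriction inequality for augmented base loci (Remark~\ref{rmk:augm}) controlling the vanishing range. The one blemish is your claim that $\dim \Bplus(L|_{\xred}) \equ \dim \Bplus(L)$ ``since $\xred\to X$ is a homeomorphism'' --- base loci depend on global sections, which need not extend from $\xred$ to $X$, so the homeomorphism only gives topological identification, not equality of base loci; however, all you actually need is the inequality $\dim \Bplus(L|_{\xred}) \dleq \dim \Bplus(L)$, which holds by Corollary~\ref{cor:bsl} and Remark~\ref{rmk:augm} and is exactly what the paper uses, so the argument is unaffected.
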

\begin{proof}
The proof of the fact a Cartier divisor on a complete scheme is ample if and only if it so when restricted to any irreducible component of the underlying reduced scheme (\cite[1.2.16]{PAG}) works here as well with a few modifications. For simplicity we will give the whole proof. First we show that  Theorem~\ref{thm:Fujita} holds for $X$ provided it is true on $X_\textrm{red}$.  To this end, let $X_{\textrm{red}}$ be $X$ with the reduced induced subscheme structure, $\shn$ the nilradical of $\OO_X$. 

Let now $\shg$ be a coherent sheaf on $X$. We will work with the filtration
\[
 \shg \dsup \shn\cdot\shg\dsup\dots\dsup\shn^r\cdot \shg \equ 0\ ,
\]
which gives rise to a bunch of short exact sequences
\[
\ses{\shn^{j+1}\cdot\shg}{\shn^j\cdot\shg}{\shn^j\cdot\shg/\shn^{j+1}\cdot\shg}\ . 
\]
The quotient sheaves $\shn^j\cdot\shg/\shn^{j+1}\cdot\shg$ are coherent $\OO_{\xred}$-modules, therefore the assumption that Theorem~\ref{thm:Fujita} holds on reduced schemes  implies
\begin{eqnarray*}
&& \HH{i}{X}{(\shn^j\cdot\shg/\shn^{j+1}\cdot\shg)\otimes \OO_X(mL+D)} \\ 
 && \equ  \HH{i}{\xred}{\shn^j\cdot\shg/\shn^{j+1}\cdot\shg\otimes\OO_{\xred}(mL+D)} \equ 0
\end{eqnarray*}
for all $i>\dim \Bplus (L|_{X_{\text{red}}})$, $m\gg 0$ and nef divisor $D$ on $X$. Applying the above vanishing to  the long exact sequence 
associated to 
\begin{eqnarray*}
&& 0 \to  {(\shn^{j+1}\cdot\shg)\otimes\OO_X(mL+D)}   \to  {(\shn^j\cdot\shg)\otimes\OO_X(mL+D)} \to \\ 
&&  \to {(\shn^j\cdot\shg/\shn^{j+1}\cdot\shg)\otimes\OO_X(mL+D)} \to 0 
\end{eqnarray*}
we see that $m\gg 0$ and $j\geq 0$
\[
 \HH{i}{X}{(\shn^{j+1}\cdot\shg)\otimes\OO_X(mL+D)} \equ \HH{i}{X}{\shn^j\cdot\shg)\otimes\OO_X(mL+D)}  
\]
whenever $i> \dim \Bplus (L|_{X_{\text{red}}})+1$, and
\[
 \HH{i}{X}{(\shn^{j+1}\cdot\shg)\otimes\OO_X(mL+D)} \twoheadrightarrow \HH{i}{X}{\shn^j\cdot\shg)\otimes\OO_X(mL+D)} 
\]
for $i=\dim \Bplus (L|_{X_{\text{red}}})+1$. According to Remark~\ref{lem:bsl}, $\dim \Bplus (L|_{X_{\text{red}}}) \leq\dim \Bplus (L)$, therefore the same 
vanishing and surjectivity results hold whenever $i>\dim \Bplus (L)+1$ and $i=\dim \Bplus (L)+1$, respectively. Descending induction on $j$ gives the required vanishing statement.

From now on we can and will assume without loss of generality that $X$ is reduced.  Write
\[
 X \equ X_1\cup\dots\cup X_r
\]
as the union of its irreducible components. In the short exact sequence
\[
 \ses{\II\cdot\shg}{\shg}{\shg/\II\cdot\shg}
\]
the left term is supported on $X_2\cup\dots\cup X_r$, while the right term is supported in $X_1$.  Induction on the number of irreducible components then tells us that
\begin{eqnarray*}
&&  \HH{i}{X}{(\II\cdot\shg)\otimes\OO_X(mL+D)} \equ \HH{i}{X_2\cup\dots\cup X_r}{(\II\cdot\shg)\otimes\OO_{X_2\cup\dots\cup X_r}(mL+D)} = 0
\end{eqnarray*}
and
\[
 \HH{i}{X}{(\shg/\II\cdot\shg)\otimes\OO_X(mL+D)} \equ \HH{i}{X_1}{(\shg/\II\cdot\shg)\otimes\OO_{X_1}(mL+D)} \equ 0
\]
for $i>\max\st{ \dim \Bplus (L|_{Y_1}),\dim \Bplus (L|_{Y_2\cup\dots\cup Y_r})}$, $m\gg 0$, and all nef divisors $D$ on $X$. From the associated cohomology long exact sequence we obtain that 
\[
 \HH{i}{X}{\shg\otimes\OO_X(mL+D)} \equ 0
\]
for all $i>\max\st{ \dim \Bplus (L|_{Y_1}),\dim \Bplus (L|_{Y_2\cup\dots\cup Y_r})}$, $m\gg 0$, and all nef divisors $D$ on $X$. We can conclude the proof by 
\[
\dim \Bplus (L) \dgeq \max\st{\dim \Bplus (L|_{Y_1}),\dim \Bplus (L|_{Y_2\cup\dots\cup Y_r})} \ .
\] 
\end{proof}

\begin{proof}(of Theorem~\ref{thm:Fujita})
By Lemma~\ref{lem:reduction}, we can assume that $X$ is reduced and irreducible. By induction on dimension we can also assume that the result is known for all sheaves supported on a proper subscheme of $X$. If $\dim \Bplus(L)\geq n$, then the Theorem holds for dimension reasons, hence we can assume that $\Bplus (L) < n$, equivalently, that $L$ is big.

According to Proposition~\ref{prop:resolution} the sheaf $\shf$ possesses  a possibly infinite 'resolution'
\[
\cdots \lra \oplus\OO_X(a_1L) \lra \oplus\OO_X(a_0L) \lra \shf\lra 0 
\]
whose cohomology sheaves are supported on $\Bplus (L)$. Therefore \cite[Proposition B.1.2]{PAG} and \cite[Remark B.1.4]{PAG}
imply that  one can assume that it suffices to find one integer $a\in \ZZ$ for which the theorem holds for $\OO_X(aL)$. 

Let $\mu:\tilde{X}\to X$ be a resolution of singularities, and set $\shk_X\deq \mu_*\OO_{\tilde{X}}(K_{\tilde{X}})$.
The divisor $L$ is big, therefore $\OO_{\tilde{X}}(\mu^*(aL)-K_{\tilde{X}})$ will have a section for $a$ sufficiently large.
This shows that  there exists an injection
\[
 u: \shk_X \hookrightarrow \OO_X(aL)
\]
of coherent sheaves on $X$.  We end up having reduced the theorem to the case when $\shf=\shk_X$. 

By Grauert--Riemenschneider vanishing we have 
\[
 R^j\mu_*\OO_{\tilde{X}}(K_{\tilde{X}}) \equ 0 \ \ \text{for all $j>0$.}
\]
Therefore
\[
 \HH{i}{X}{\shk_X\otimes\OO_X(aL+D)} \equ \HH{i}{\tilde{X}}{\OO_{\tilde{X}}(K_{\tilde{X}}+\mu^*(aL+D))}
\]
for all $i$. Consider the cohomology group on the right-hand side. By Corollary~\ref{cor:ineq}, the restriction $L|_{E_1\cap\dots\cap E_q}$ is ample for $q> \dim\Bplus (L)$. Then Corollary~\ref{cor:birational vanishing} implies 
\[
 \HH{i}{\tilde{X}}{\OO_{\tilde{X}}(K_{\tilde{X}}+\mu^*(aL)+\mu^*D))} \equ 0
\]
for all $i>\dim \Bplus (L)$.
\end{proof}

\end{document}